\newcommand{\erase}[1]{}
\newtheorem{theorem}{Theorem}[section]
\newtheorem{proposition}[theorem]{Proposition}
\newtheorem{corollary}[theorem]{Corollary}
\newtheorem{remark}[theorem]{Remark}
\newtheorem{fact}[theorem]{Fact}
\newtheorem{lem}[theorem]{Lemma}
\numberwithin{equation}{section}
\newcommand{\bp}{\begin{pmatrix}}
\newcommand{\ep}{\end{pmatrix}}
\newcommand{\bps}{\begin{smallmatrix}}
\newcommand{\eps}{\end{smallmatrix}}
\def\R{{\mathbb R}}
\def\Z{{\mathbb Z}}
\def \0{{\bf 0}}
\def \1{{\bf 1}}
\def \rank{\mathrm{rank}}
\def \mf#1#2#3#4{
\xymatrix{{#1}\  \ar@<0.4ex>[r]^{{#2}} & \ {#4}
\ar@<0.4ex>[l]^{{#3}}
}
}
\def \mfs#1#2#3#4{\!
\xymatrix@C=1.5em{{#1} \! \ar@<0.2ex>[r]^{{#2}} & \! {#4}
\ar@<0.2ex>[l]^{{#3}}
}
\!}
\def \mfl#1#2#3#4{
\xymatrix@C=2.6em{{#1}\  \ar@<0.4ex>[r]^{{#2}} &\  {#4}
\ar@<0.2ex>[l]^{{#3}}
}
}
\def \mfss#1#2#3#4{\!
\xymatrix@C=1.5em{{#1} \ar@<0.3ex>[r]^{{#2}} & {#4}
\ar@<0.3ex>[l]^{{#3}}
}
\!}
\begin{document}
\title{On relations for zeros of $f$-polynomials and $f^{+}$-polynomials.} 

\author{Tadashi Ishibe}

  







\maketitle
\begin{abstract}
Let $\Phi$ be an irreducible (possibly noncrystallographic) root system of rank $l$ of type $P$. For the corresponding cluster complex $\Delta(P)$, which is known as pure $(l-1)$-dimensional simplicial complex, we define the generating function of the number of faces of $\Delta(P)$ with dimension $i-1$, which is called the {\it $f$-polynomial}. We show that the $f$-polynomial has exactly $l$ simple real zeros on the interval $(0, 1)$ and the smallest root for the infinite series of type $A_l$, $B_l$ and $D_l$ monotone decreasingly converges to zero as the rank $l$ tends to infinity. We also consider the generating function (called the {\it $f^{+}$-polynomial}) of the number of faces of the positive part $\Delta_{+}(P)$ of the complex $\Delta(P)$ with dimension $i-1$, whose zeros are real and simple and are located in the interval $(0, 1]$, including a simple root at $t=1$. We show that the roots $\{ t^{+}_{P, \nu+1} \}_{\nu=1}^{l-1}$ in decreasing order of $f^{+}$-polynomial alternate with the roots $\{ t_{P, \nu} \}_{\nu=1}^{l}$ in decreasing order of $f$-polynomial.
\end{abstract}
\section{Introduction}
Let $\Phi$ be an irreducible (possibly noncrystallographic) root system of rank $l$ of type  $P\in${\small $\{A_l\ (l\!\ge\!1), B_l\ (l\! \ge\!2), D_l\ (l\! \ge\! 4), E_l\ (l=6, 7, 8), F_4, G_2, H_3, H_4, I_2(p)\,(p\ge3)\}$}. Let $\Phi^{+}$ be a positive system for $\Phi$ with corresponding simple system $\Pi$. The cluster complex $\Delta(P)$\footnote{In \cite{[F-Z]}, the notion of cluster complex $\Delta(P)$ was originally defined for the case where $\Phi$ is crystallographic. As was noted in \cite{[F-R2]}, the constructions in \cite{[F-Z]} extend verbatim to the noncrystallographic types.} introduced by Fomin-Zelevinsky (\cite{[F-Z]}) is a pure $(l-1)$-dimensional simplicial complex whose ground set is the set $\Phi_{\ge{-1}}:=\Phi^{+}\sqcup(-\Pi)$ of almost-positive roots and its geometric realization is homeomorphic to a sphere. Let $\Delta_{+}(P)$ denote the induced subcomplex of $\Delta(P)$ on the vertex set $\Phi^{+}$. The complex $\Delta_{+}(P)$ is referred to as the {\it positive part} of $\Delta(P)$. For a non-negative integer $i$ with $0\le i \le d-1$, let $f_{i}$ (resp. $f^{+}_{i}$) denote the number of faces of $\Delta(P)$ (resp. $\Delta_{+}(P)$) with dimension $i-1$. We call the sequence $(f_{0}, f_{1}, \ldots, f_{l-1})$ (resp. $(f^{+}_{0}, f^{+}_{1}, \ldots, f^{+}_{l-1})$) the $f$-vector of the complex $\Delta(P)$ (resp. $\Delta_{+}(P)$). Then, we define the $f$-polynomial and $f^{+}$-polynomial of type $P$ in the formal variable $t$ by
{\footnotesize\[
 f_P(t) := \sum_{i=0}^{l}f_{i-1}(P)(-t)^{i},\,\, f^{+}_P(t) := \sum_{i=0}^{l}f^+_{i-1}(P)(-t)^{i},
\]}
where {\small$f_{-1}(P):= 1$} and {\small$f^{+}_{-1}(P):= 1$}.
In this article, we will study the zero loci of the $f$-polynomial. In a similar manner to \cite{[I-S]}, we will show that the $f$-polynomial has an unexpected strong connection with orthogonal polynomials. By making the best use of these properties, we will show that the $f$-polynomial has exactly $l$ simple real zeros on the interval $[0, 1]$ and the smallest root for the infinite series of type $A_l$, $B_l$ and $D_l$ monotone decreasingly converges to zero as the rank $l$ tends to infinity. Furthermore, we obtain inequalities for zeros of $f$-polynomials and $f^{+}$-polynomials.\par
In \cite{[I-S]}, the authors studied the zero loci of the {\it skew-growth function} (\cite{[Sa2]})\footnote
{For a positive homogeneously finitely presented monoid $M ={\langle L \mid R\,\rangle}_{mo}$ that satisfies the cancellation condition (i.e. $axb = ayb$ implies $x = y$ ) and is equipped with a degree map $\deg\!:\!M \to\!\Z_{\ge0}$ defined by assigning to each equivalence class of words the length of the words, the {\it spherical growth function} for the monoid $M$ is defined as
\[
 P_{M, \deg}(t)\!:=\!\!\sum_{u\!\in\! M}\!t^{\deg(u)}.
\] 
If any two elements in the monoid $M$ admit left (resp. right) least common multiple, the {\it skew-growth function} for it is defined as
\[
 N_{M, \deg}(t)\!:=\!\!\sum_{J \subset I_{0}}\!(-1)^{\#J}t^{\deg(\mathrm{lcm}_{r}(J))},
\] 
where $I_{0}:=\mathrm{min}_{l}(M \setminus \{ 1 \})$ (the set of minimal elements of $M$ with respect to the partial ordering induced by the left division relation). Then, the following inversion formula for $M$ holds
\[
P_{M, \deg}(t)\cdot N_{M, \deg}(t) = 1.
\]
This fact is generalized in \cite{[Sa2]}. The distribution of the zeros of the skew-growth functions are investigated from the viewpoint of limit functions (\cite{[Sa3]}).}
of a {\it dual Artin monoid} of finite type (\cite{[Be]}). They noted that the skew-growth function for the dual
\newpage
\noindent
 Artin monoid of type $P$ coincides with the $f^{+}$-polynomial (for the complex $\Delta_{+}(P)$) of the same type\footnote{This fact is proved in \cite{[At]}Corollary 2.5. In addition to this fact, it is known that the skew-growth function is identified with the generating function of M\"obius invariants (called the {\it characteristic polynomial}) of the {\it lattice of non-crossing partitions} (\cite{[K], [Be], [B-W]}).}. Suggested by some numerical experiments concerning the $f^{+}$-polynomials, they conjectured the following 1, 2 and 3 (\cite{[I-S]})\footnote{In \cite{[Sa1]}, the author also gave three conjectures on the skew-growth function for an Artin monoid of finite type.}. \\
\noindent
1.\ {\small$f^{+}_P(t)/(1\!-\!t)$} is an irreducible polynomial over $\Z$, up to the trivial factor $1-2t$ for the types $A_l$ ($l$: even) and $D_4$. 

\noindent
2.\ {\small$f^{+}_P(t)$} has  $l=\rank(P)$ simple real roots on the interval $(0, 1]$, including a simple root at $t=1$.

\noindent
3.\ The smallest root of {\small$f^{+}_P(t)$} monotonously decreasingly converges to $0$ as the rank $l$ tends to infinity for the infinite series of type $A_l$, $B_l$ and $D_l$.  \\

\smallskip
By analogy with 1, 2 and 3, and also inspired by some numerical experiments (see Appendix II for the figures of the zero loci of the functions of types $A_{20},\ B_{20},\ D_{20}$ and $E_8$), we conjecture the following. 

\medskip
\noindent
{\bf Conjecture 1.}  $f_P(t)$ is an irreducible polynomial over $\Z$, up to the trivial factor $1-2t$ for the types $A_l$ ($l$: odd), $B_l$ ($l$: odd), $D_l$ ($l$: odd),  $E_7$ and $H_3$.

\smallskip
\noindent
{\bf Conjecture 2.}  $f_P(t)$ has  $l=\rank(P)$ simple real roots on the interval $(0, 1)$.

\smallskip
\noindent
{\bf Conjecture 3.}  The smallest root of $f_P(t)$ monotonously decreasingly converges to $0$ as the rank $l$ tends to infinity for the infinite series of type $A_l$, $B_l$ and $D_l$. 
\begin{remark}
{\rm Conjecture 1. is approved  for types $A_l$   ($1\le l \le 30$), 
$B_l$  ($2\le l \le 30$),  $D_l$   ($4\le l \le 30$), $E_6$, $E_7$, $E_8$, $F_4$, $G_2$, $H_3$, $H_4$ and $I_2(p)$ ($p\ge 3$)  by using the software package Mathematica on the Table A and B in Appendix I.  }
\end{remark}
In addition to the above three conjectures, we give one more conjecture on a relation for zeros of $f$-polynomials and $f^{+}$-polynomials. For $P\in${\small $\{A_l\ (l\!\ge\!1), B_l\ (l\! \ge\!1), D_l\ (l\! \ge\! 4), E_l\ (l=6, 7, 8), F_4, G_2, H_3,$ $H_4, I_2(p)\ (p\ge3)\}$}, let $t_{P, \nu}$ be the zeros of $f_{P}(t)$ in decreasing order (i.e. $1 > t_{P, 1} > t_{P, 2}  > \cdots > t_{P, l} > 0$) and let $t^{+}_{P, \nu}$ be the zeros of $f^{+}_{P}(t)$ in decreasing order (i.e. $1 = t^{+}_{P, 1} > t^{+}_{P, 2}  > \cdots > t^{+}_{P, l} > 0$).\\
\smallskip
\noindent
{\bf Conjecture 4.}  The system $\{ t^{+}_{P, \nu+1} \}_{\nu=1}^{l-1}$ alternates with the system $\{ t_{P, \nu} \}_{\nu=1}^{l}$, that is,
\[
t_{P, \nu} > t^{+}_{P, \nu+1} >t_{P, \nu+1} , \,\,\,\,(\nu= 1, \ldots, l-1).
\]
\par

The aim of the present paper is to give affirmative answers to Conjectures 2, 3 and 4. In \S2, we first prepare some useful functional equations (2.1), (2.2) and (2.3) for later use. In the functional equation for type $A_l$ (resp. $B_l$), the $f$-polynomial of type $A_l$ (resp. $B_l$) has a simple relation with the $f^{+}$-polynomial of the same type. Although for type $D_l$ the $f$-polynomial does not have a simple relation with the $f^{+}$-polynomial of the same type, the $f$-polynomial of type $D_l$ has a rather complicated relation with the $f$-polynomial of type $B_l$. Hence, from the equation (2.2) and the equation (6.4) in \cite{[I-S]}, we may say that the $f$-polynomial of type $D_l$ has a certain relation with $f^{+}$-polynomial of type $D_l$. Secondly, for the three infinite series $A_l,\ B_l$ and $D_l$ of $f$-polynomials, we show that they are expressed in higher (logarithmic) derivatives of the polynomials of the form $t^{p}(1-t)^{q}$. In analogy with the classical Rodrigues's formula in the theory of orthogonal polynomials \cite{[Sz]}, we call the formula {\it Rodrigues type formula}. As a consequence of the formulae, the polynomials are expressed by a use of orthogonal polynomials. In \S3, we will show that the series of the $f$-polynomials of type $A_l$ and $B_l$ satisfies $3$-term recurrence relation, respectively, and the series of the $f$-polynomials of type $D_l$ satisfies $4$-term recurrence relation. In \S4, we will prove Conjecture 2 except for types $D_l$ affirmatively. For type $A_l$ (resp. $B_l$), the $f$-polynomial, up to a constant factor, coincides with a certain orthogonal polynomial. Therefore, Conjecture 2 for types $A_l$ and $B_l$ is true (\cite{[Sz]}). For exceptional types $E_6,\ E_7,\ E_8,\ F_4,\ G_2$ and non-crystallographic types $H_3,\  H_4$ and $I_2(p)$, we will construct Sturm's sequence on the interval $[0, 1]$. Due to the Sturm Theorem (see for instance \cite{[T]} Theorem 3.3), Conjecture 2 for them is true. In \S5, we will prove Conjecture 2 for types $D_l$ affirmatively. By applying the intermediate value theorem to the equation (2.3), we show that Conjecture 2 for types $D_l$ is true. In \S6, we will prove Conjecture 3 affirmatively. For series $A_l$, due to the equation (2.1), the smallest zero locus of {\small$f_{A_l}(t)$} coincides with the smallest zero locus of {\small$f^{+}_{A_{l+1}}(t)$}. From Theorem 6.1 in \cite{[I-S]}, we show that Conjecture 3 for series $A_l$ is true. For series $B_l$, from the equation (2.9), we show that the smallest zero locus of {\small$f_{B_l}(t)$} coincides with the smallest zero locus of the shifted Legendre polynomial {\small$\widetilde{P}_{l}^{(0, 0)}(t):=P_l^{(0, 0)}(2t-1)$} (the shifting of Legendre polynomial {\small$P_l^{(0, 0)}(t)$}) of degree $l$. From \cite{[Sz]} Theorem 6.21.3, we have that the smallest zero locus of Legendre polynomial {\small$P_{l}^{(0,0)}(t)$} monotonously decreasingly converges to zero as the rank $l$ tends to infinity. Therefore, we show that Conjecture 3 for series $B_l$ is true. The proof for the series $D_l$ uses again the equation (2.3), where the polynomials of type  $D_l$ are expressed by those of type $B_l$ so that the roots of type $D_l$ are sandwiched by the roots of type $B_l$. Hence, we show that Conjecture 3 for types $D_l$ is true. In \S7, we will prove Conjecture 4 except for types $D_l$ affirmatively. For series $A_l$, due to the equation (2.1), we have that Conjecture 4 for series $A_l$ is true. Since the $f$-polynomial of type $B_l$, up to a constant factor, coincides with the shifted Legendre polynomial {\small$\widetilde{P}_{l}^{(0, 0)}(t)$}, due to Proposition 6.6 in \cite{[I-S]}, we have that Conjecture 4 for series $B_l$ is true. 
In \S8, we will prove Conjecture 4 for types $D_l$ affirmatively. The proof is divided into two parts and is more complicated. In Part I, we discuss location of the following roots $t_{D_{l}, l+1-\nu}$ and $t^{+}_{D_{l}, l+1-\nu}$ ($\nu= 1, \ldots, \lfloor l/2 \rfloor$). In Part II, we discuss location of the following roots $t_{D_{l},  \lceil l/2 \rceil+1-\nu}$ and $t^{+}_{D_{l}, \lceil l/2 \rceil+1-\nu}$ ($\nu= 1, \ldots, \lceil l/2 \rceil$).

\begin{remark} 
In \cite{[Br]}, the author showed that the $h$-polynomial of type $P$ has $l$ simple real roots on the interval $(-\infty, 0)$. The $h$-polynomial of type $P$ in the formal variable $t$ is defined as
{\footnotesize\[
 h_P(t) := \sum_{i=0}^{l}h_{i}(P)t^{i},
\]}
where the coefficients $h_{i}(P)$ are defined by the equation
{\footnotesize\[
 \sum_{i=0}^{l}f_{i-1}(P)(x-1)^{l-i}=  \sum_{i=0}^{l}h_{i}(P)x^{l-i}.
\]}
Hence, we have
{\footnotesize\begin{equation}
 f_P(t) = (1-t)^{l}h_P(\frac{t}{t-1}).
\end{equation}}
From (1.1), this implies that Conjecture 2. is approved.
\end{remark}
\section{Rodrigues type formulae and orthogonal polynomials}
In this section, we first prepare some useful propositions for later use. Next, for the three infinite series $A_l,\ B_l$ and $D_l$ of $f$-polynomials, we show that they are expressed in higher (logarithmic) derivatives of the polynomials of the form $t^{p}(1-t)^{q}$. In analogy with the classical Rodrigues's formula \cite{[Sz]}, we call the formula {\it Rodrigues type formula}. As a consequence of the formulae, the polynomials are expressed by a use of orthogonal polynomials.
\begin{proposition}
1.  For type $A_l$, the following identity holds for $l = 1, 2, \ldots$:
\begin{equation}
(1-t)f_{A_{l}}(t) = f^+_{A_{l+1}}(t).
\end{equation}
2.  For type $B_l$, the following identity holds for $l = 2, 3, \ldots$:
\begin{equation}
f_{B_{l}}(t) + f_{B_{l-1}}(t) = 2f^+_{B_{l}}(t).
\end{equation}
3.  The following identity holds for $l = 4, 5, \ldots$:
\begin{equation}
f_{D_{l}}(t) = \frac{l-2}{2(l-1)}f_{B_{l}}(t) + \frac{l}{2(l-1)}(1-2t)f_{B_{l-1}}(t).
\end{equation}
\end{proposition}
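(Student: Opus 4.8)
The plan is to reduce each of the three identities to a family of identities among the face numbers $f_{k-1}(P)$ and $f^{+}_{k-1}(P)$, and then verify the latter from the explicit (Fomin--Zelevinsky) formulae for these numbers recorded in Appendix I. The basic device is that, in the sign convention $(-t)^{i}$ used to define the polynomials, multiplication by $(1-t)$ (resp. $(1-2t)$) acts as an index shift, since $-t\cdot(-t)^{i}=(-t)^{i+1}$ and $-2t\cdot(-t)^{i}=2(-t)^{i+1}$. Comparing coefficients of $(-t)^{k}$, identity (2.1) becomes the Pascal-type recursion $f^{+}_{k-1}(A_{l+1})=f_{k-1}(A_{l})+f_{k-2}(A_{l})$, identity (2.2) becomes $f_{k-1}(B_{l})+f_{k-1}(B_{l-1})=2f^{+}_{k-1}(B_{l})$, and, after clearing denominators, identity (2.3) becomes
\[
2(l-1)f_{k-1}(D_{l})=(l-2)f_{k-1}(B_{l})+l\bigl(f_{k-1}(B_{l-1})+2f_{k-2}(B_{l-1})\bigr).
\]

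Before computing, I would run two cheap consistency checks that also pin down normalisations. At $t=0$ each identity reads $1=1$, since the constant term of every polynomial is $f_{-1}=f^{+}_{-1}=1$. At $t=1$ I would use that $\Delta(P)$ is homeomorphic to $S^{l-1}$, so its reduced Euler characteristic is $(-1)^{l-1}$ and hence $f_{P}(1)=(-1)^{l}$, while $f^{+}_{P}(1)=0$ (the simple root at $t=1$ noted in the introduction). These two facts already force (2.1)--(2.3) at $t=1$: for instance $f_{B_{l}}(1)+f_{B_{l-1}}(1)=(-1)^{l}+(-1)^{l-1}=0=2f^{+}_{B_{l}}(1)$, and the factor $(1-t)$ in (2.1) is matched exactly by the zero of $f^{+}_{A_{l+1}}$ at $t=1$. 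Passing these checks guards against sign and indexing slips.

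For the main computation in types $A$ and $B$ I would substitute the closed forms. For type $B$ one has the clean product $f_{k-1}(B_{n})=\binom{n}{k}\binom{n+k}{k}$, so (2.2) reduces to a Vandermonde/Pascal-type relation between $\binom{l}{k}\binom{l+k}{k}$ and $\binom{l-1}{k}\binom{l-1+k}{k}$, which is routine and simultaneously yields the positive-part numbers $f^{+}_{k-1}(B_{l})$. Type $A$ uses the Kirkman--Cayley dissection numbers $f_{k-1}(A_{n})$ of the $(n+3)$-gon, and (2.1) reduces to the recursion displayed above, again a direct binomial manipulation. In both cases a bijective argument is also available from the polygon-dissection model of $\Delta(A_{n})$ and the centrally symmetric model of $\Delta(B_{n})$, but the formula-level verification is cleaner to write.

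The main obstacle is type $D$. Here the $D_{l}$ face numbers are not a single product but a sum of two binomial products, and the identity carries the rational weights $\tfrac{l-2}{2(l-1)}$ and $\tfrac{l}{2(l-1)}$; after clearing denominators one must match a genuinely two-term expression against $(l-2)f_{k-1}(B_{l})+l\,f_{k-1}(B_{l-1})+2l\,f_{k-2}(B_{l-1})$ term by term, carrying the extra $l$-dependent factors through the binomial coefficients. I would organise this by splitting the Fomin--Zelevinsky formula for $f_{k-1}(D_{l})$ into its two summands and matching each against the corresponding pieces of the right-hand side, using the extreme coefficients as anchors: the constant term $1$, the linear term equal to the number $l^{2}$ of almost-positive roots (e.g. $f_{0}(D_{4})=16$), and the top coefficient equal to the cluster number $\tfrac{3l-2}{l}\binom{2l-2}{l-1}$ (e.g. $f_{3}(D_{4})=50$). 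Equivalently, one may read (2.3) as \emph{defining} the $D_{l}$ face numbers from the $B$-numbers and check that the resulting sequence agrees with Fomin--Zelevinsky; this is the route I expect to be least error-prone.
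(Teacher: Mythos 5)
Your proposal is correct and takes essentially the same route as the paper: comparing coefficients of $(-t)^k$ and verifying the resulting binomial identities from the explicit closed forms, with your type-$D$ suggestion (read (2.3) as determining the $D_l$ coefficients from the $B$-numbers and match against Fomin--Zelevinsky) being exactly the computation the paper performs. The only caveat is sourcing: the positive-part coefficients $f^{+}_{k-1}(P)$ are not in this paper's Appendix I (which lists only $f$-polynomials) but must be taken from Table A of \cite{[I-S]}, as the paper does.
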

\begin{proof}
1: From Table A, we compute the coefficient of $(-t)^k$ of $(1-t)f_{A_l}(t)$.
{\footnotesize\[
\frac{1}{l+2}\binom {l}{k}\binom {l+k+2}{k+1} + \frac{1}{l+2}\binom {l}{k-1}\binom {l+k+1}{k} = \frac{1}{l+1}\binom {l+1}{k}\binom {l+k+1}{k+1}.
\]}
This coincides with the coefficient of $(-t)^k$ of $N_{G^{dual +}_{A_{l+1}}}(t)$ in Table A in \cite{[I-S]}.\\
2: From Table A, we compute the coefficient of $(-t)^k$ on the LHS of (2.2).
{\footnotesize\[
\binom {l}{k}\binom {l+k}{k} + \binom {l-1}{k}\binom {l+k-1}{k} = 2\binom {l}{k}\binom {l+k-1}{k}.
\]}
This coincides with the coefficient of $(-t)^k$ of $2f^{+}_{B_l}(t) (= 2N_{G^{dual +}_{B_{l}}}(t))$ in Table A in \cite{[I-S]}.\\
3: From Table A, we compute the coefficient of $(-t)^k$ on the RHS of (2.3).
{\footnotesize\[
\frac{l-2}{2(l-1)}\cdot\frac{(l+k)!}{(l-k)!k!k!} + \frac{l}{2(l-1)}\cdot\frac{(l+k-1)!}{(l-1-k)!k!k!} + \frac{l(l+k-2)!}{(l-1)(l-k)!(k-1)!(k-1)!}
\]}
{\footnotesize\[
= \frac{(l+k-2)!}{(l-k)!k!k!}\biggl\{  l(l+k-1) + k(k-1) \biggr\}.\,\,\,\,\,\,\,\,\,\,\,\,\,\,\,\,\,\,\,\,\,\,\,\,\,\,\,\,\,\,\,\,\,\,\,\,\,\,\,\,\,\,\,\,\,\,\,\,\,\,\,\,\,\,\,\,\,\,\,\,\,\,\,\,\,\,\,\,\,\,\,\,\,\,\,\,\,\,\,\,\,\,\,\,\,\,\,\,\,\,\,\,\,\,\,\,\,\,\,\,\,\,\,\,\,\,\,
\]}
This coincides with the coefficient of $(-t)^k$ on the left hand side in Table A.
\end{proof}

\medskip
\begin{theorem} {\bf (Rodrigues type formula) }
\label{Rodrigues} 
 For types $A_l \ (l\ge1)$, $B_l\ (l\ge2)$ and $D_l \ (l\ge4)$, we have the formulae:
{\small
\begin{align}
\label{RodriguesA}
t(1-t)f_{A_l}(t) &= \frac{1}{(l+1)!} \frac{\mathrm{d}^{l}}{\mathrm{d}t^{l}}\biggl[ t^{l+1}(1-t)^{l+1} \biggr], \\
\label{RodriguesB}
f_{B_{l}}(t) & =  \frac{1}{l!} \frac{\mathrm{d}^{l}}{\mathrm{d}t^{l}}\biggl[ t^{l}(1-t)^l \biggr], \\
\label{RodriguesD}
f_{D_{l}}(t)  &= \frac{1}{(l-1)!}\frac{\mathrm{d}^{l-1}}{\mathrm{d}t^{l-1}}\biggl[ t^{l-1}(1-t)^{l} \biggr] + \frac{1}{(l-2)!}\frac{\mathrm{d}^{l-2}}{\mathrm{d}t^{l-2}}\biggl[ t^{l}(1-t)^{l-2} \biggr]\\
&= \frac{1}{(l-1)!}\frac{\mathrm{d}^{l-2}}{\mathrm{d}t^{l-2}}\biggl[ t^{l-2}(1-t)^{l-2} \Bigl\{(l-1) - (3l-2)t + (3l-2)t^2 \Bigr\} \biggr].
\end{align}
}
\end{theorem}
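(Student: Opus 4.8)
The plan is to establish each of the three identities \eqref{RodriguesA}, \eqref{RodriguesB}, \eqref{RodriguesD} by comparing coefficients of $(-t)^k$ on both sides, using the explicit $f$-vectors recorded in Table A together with the classical Leibniz/binomial expansion of the higher derivatives on the right. For \eqref{RodriguesB}, I would first expand $t^l(1-t)^l=\sum_{j=0}^{l}\binom{l}{j}(-1)^j t^{l+j}$ and apply $\frac{d^l}{dt^l}$ termwise, using $\frac{d^l}{dt^l}t^{l+j}=\frac{(l+j)!}{j!}t^{j}$. Dividing by $l!$, the coefficient of $t^k$ becomes $\frac{1}{l!}\binom{l}{k}(-1)^k\frac{(l+k)!}{k!}=(-1)^k\binom{l}{k}\binom{l+k}{k}$, which by the sign convention $f_{k-1}(B_l)(-t)^k$ is exactly the coefficient of $(-t)^k$ in $f_{B_l}(t)$ as given by $\binom{l}{k}\binom{l+k}{k}$ in Table A. This is the cleanest of the three and serves as the template.

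For \eqref{RodriguesA} I would proceed identically, expanding $t^{l+1}(1-t)^{l+1}=\sum_j\binom{l+1}{j}(-1)^j t^{l+1+j}$, applying $\frac{1}{(l+1)!}\frac{d^l}{dt^l}$, and matching against the coefficient of $(-t)^k$ in $t(1-t)f_{A_l}(t)$. Note that $t(1-t)f_{A_l}(t)=t f^+_{A_{l+1}}(t)$ by \eqref{EqA} (equation (2.1) of Proposition 2.1), so one may either work directly with the $A_l$ coefficients $\frac{1}{l+1}\binom{l}{k}\binom{l+k+1}{k+1}$ from Table A, or route through the already-proved identity; I would do the direct coefficient match, since it keeps the argument self-contained. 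The only subtlety is bookkeeping the index shift coming from the extra factor $t(1-t)$ on the left, but this is a routine reindexing of the binomial sum.

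The genuinely more involved case is \eqref{RodriguesD}, and I expect the main obstacle to lie there. The strategy is to exploit Proposition 2.1(3), namely \eqref{EqD}, which expresses $f_{D_l}$ linearly in $f_{B_l}$ and $(1-2t)f_{B_{l-1}}$, and to feed the already-established formula \eqref{RodriguesB} for the $B$-series into it. The first displayed equality in \eqref{RodriguesD} should then follow by recognizing $\frac{1}{(l-1)!}\frac{d^{l-1}}{dt^{l-1}}[t^{l-1}(1-t)^l]$ and $\frac{1}{(l-2)!}\frac{d^{l-2}}{dt^{l-2}}[t^l(1-t)^{l-2}]$ as the two Rodrigues-type building blocks whose coefficient expansions reproduce the combination on the right-hand side of \eqref{EqD}; concretely I would expand both derivative terms by the same Leibniz computation as above and verify that their sum matches the coefficient $\frac{(l+k-2)!}{(l-k)!k!k!}\{l(l+k-1)+k(k-1)\}$ computed in the proof of Proposition 2.1(3). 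The second equality in \eqref{RodriguesD} is then a consolidation: using the general product rule $\frac{d^{l-1}}{dt^{l-1}}[t^{l-1}(1-t)^l]=\frac{d^{l-2}}{dt^{l-2}}\frac{d}{dt}[t^{l-1}(1-t)^l]$ and factoring a common $\frac{d^{l-2}}{dt^{l-2}}[t^{l-2}(1-t)^{l-2}(\cdots)]$ out of both terms, where the inner quadratic $(l-1)-(3l-2)t+(3l-2)t^2$ arises from collecting the polynomial prefactors after differentiating once. The delicate point is verifying that after pulling out the order-$(l-2)$ derivative the remaining polynomial factor is exactly this quadratic; I would confirm this either by a direct one-step differentiation identity valid as polynomials (before applying the high-order derivative) or, as a safeguard, by matching coefficients of $(-t)^k$ one final time against Table A, which is guaranteed to close the argument since all quantities are polynomials of degree $l$ determined by their coefficients.
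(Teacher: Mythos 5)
Your proposal is correct and follows essentially the same route as the paper's proof: for (2.5) and the first expression (2.6) of the $D_l$ formula, the paper performs exactly your binomial expansion of the bracketed polynomial, termwise differentiation, and coefficient match against Table A, while your consolidation argument for the second expression (2.7) --- writing $\frac{\mathrm{d}^{l-1}}{\mathrm{d}t^{l-1}}=\frac{\mathrm{d}^{l-2}}{\mathrm{d}t^{l-2}}\frac{\mathrm{d}}{\mathrm{d}t}$ and collecting $(1-t)\{(l-1)-(2l-1)t\}+(l-1)t^2=(l-1)-(3l-2)t+(3l-2)t^2$ --- is precisely the elementary step the paper leaves implicit. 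The only divergence is cosmetic: for (2.4) the paper invokes Theorem 2.1 of \cite{[I-S]} together with (2.1) instead of your direct coefficient match (both work, and you mention this alternative yourself); note also a harmless transcription slip in your sketch, since the coefficient of $(-t)^k$ in $f_{A_l}(t)$ from Table A is $\frac{1}{l+2}\binom{l}{k}\binom{l+k+2}{k+1}$, the quantity $\frac{1}{l+1}\binom{l+1}{k}\binom{l+k+1}{k+1}$ being the coefficient of $(1-t)f_{A_l}(t)$ from the proof of Proposition 2.1.
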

\begin{proof} 
Type $A_l$: Due to Theorem 2.1 in \cite{[I-S]}, the right hand side of (2.4) coincides with $tf^{+}_{A_{l+1}}(t)$. Thanks to (2.1), this coincides with the left hand side of (2.4).\\

\smallskip
\noindent
Type $B_l$: The right hand side of (\ref{RodriguesB}) is calculated as 
{\footnotesize
\[
\frac{1}{l!} \frac{\mathrm{d}^{l}}{\mathrm{d}t^{l}}\biggl[ t^{l}(1-t)^l\biggr] = 
\frac{1}{l!} \frac{\mathrm{d}^{l}}{\mathrm{d}t^{l}}\biggl[\sum_{k=0}^{l} (-1)^{k} \binom {l}{k} t^{l+k}\biggr]= \sum_{k=0}^{l}(-1)^{k}\frac{(l+k)!}{(l-k)!k!k!}t^k.
\]}
This gives RHS of the expression of $f_{B_{l}}(t)$ in Table A.\\

\smallskip
\noindent
Type $D_l$: We compute the right hand side of (\ref{RodriguesD}).
{\footnotesize\[
\ \  \frac{1}{(l-1)!} \frac{\mathrm{d}^{l-1}}{\mathrm{d}t^{l-1}}\biggl[\sum_{k=0}^{l} (-1)^{k} \binom {l}{k} t^{l+k-1} \biggr] + \frac{1}{(l-2)!}\frac{\mathrm{d}^{l-2}}{\mathrm{d}t^{l-2}}\biggl[\sum_{k=0}^{l-2} (-1)^{k} \binom {l-2}{k} t^{l+k} \biggr]
 \]}
\vspace{-0.1cm}
{\footnotesize\[
= \sum_{k=0}^{l}(-1)^{k}\frac{l(l+k-1)!}{(l-k)!k!k!}t^k + \sum_{k=0}^{l-2}(-1)^{k}\frac{(l+k)!}{(l-2-k)!k!(k+2)!}t^{k+2} \,\,\,\,\,\,\,\,\,\,\,\,\,\,\,\,\,\,\,\,\,\,\,\,\,\,\,
\]}
\vspace{-0.1cm}
{\footnotesize\[
= \sum_{k=0}^{l}(-1)^{k}\frac{l(l+k-1)!}{(l-k)!k!k!}t^k + \sum_{k=2}^{l}(-1)^{k}\frac{(l+k-2)!}{(l-k)!k!(k-2)!}t^{k} \,\,\,\,\,\,\,\,\,\,\,\,\,\,\,\,\,\,\,\,\,\,\,\,\,\,\,\,\,\,\,\,\,\,\,\,\,\,\,\,\,\,\,
\]}
\vspace{-0.1cm}
{\footnotesize\[
= \sum_{k=0}^{l} (-1)^{k} \biggl( \binom {l}{k}\binom {l+k-1}{k} + \binom {l-2}{k-2}\binom {l+k-2}{k}\biggr) t^k.\,\,\,\,\,\,\,\,\,\,\,\,\,\,\,\,\,\,\,\,\,\,\,\,\,\,\,\,\,\,\,\,\,\,\,\,\,\,\,\,\,\,\,\,\,\,\,\,\,
\vspace{-0.1cm}
\]}
This gives RHS of the expression of $f_{D_{l}}(t)$ in Table A.
\end{proof} 

For $l\in\Z_{\ge0}$ and $\alpha, \beta\in \R_{>-1}$, let $P^{(\alpha, \beta)}_{l}(x)$ be the Jacobi polynomial (c.f.\ \cite{[Sz]} 2.4). Let us introduce the  {\it shifted Jacobi polynomial} of degree $l$ by setting
\[
 \widetilde{P}^{(\alpha, \beta)}_{l}(t) := P^{(\alpha, \beta)}_{l}(2t-1) . 
\]
\begin{fact}\cite{[Sz]}(4.3.1) 
\label{Jacobi}
The shifted Jacobi polynomial 
satisfies the following equality
\[
\begin{array}{c}
(t-1)^{\alpha}t^{\beta}\widetilde{P}^{(\alpha, \beta)}_{l}(t) = \frac{1}{l!} \frac{\mathrm{d}^{l}}{\mathrm{d}t^{l}}\big[ (t-1)^{l + \alpha}t^{l + \beta} \big].
\end{array}
\]
\end{fact}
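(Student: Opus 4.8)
The statement is nothing but the Rodrigues formula for the Jacobi polynomial written after the affine rescaling $x\mapsto 2t-1$, so the plan is to deduce it directly from the classical (unshifted) Rodrigues formula and thereby reduce the whole argument to a bookkeeping of constants. I would start from Szeg\H{o}'s formula \cite{[Sz]}(4.3.1),
\[
P^{(\alpha,\beta)}_{l}(x)=\frac{(-1)^{l}}{2^{l}\,l!}\,(1-x)^{-\alpha}(1+x)^{-\beta}\frac{\mathrm{d}^{l}}{\mathrm{d}x^{l}}\Big[(1-x)^{l+\alpha}(1+x)^{l+\beta}\Big],
\]
and substitute $x=2t-1$, recalling that by definition $\widetilde{P}^{(\alpha,\beta)}_{l}(t)=P^{(\alpha,\beta)}_{l}(2t-1)$.

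Under this substitution one has $1-x=-2(t-1)$ and $1+x=2t$, while the chain rule gives $\frac{\mathrm{d}}{\mathrm{d}x}=\frac12\frac{\mathrm{d}}{\mathrm{d}t}$, hence $\frac{\mathrm{d}^{l}}{\mathrm{d}x^{l}}=2^{-l}\frac{\mathrm{d}^{l}}{\mathrm{d}t^{l}}$. Writing $(-2(t-1))^{l+\alpha}=(-2)^{l+\alpha}(t-1)^{l+\alpha}$ and $(2t)^{l+\beta}=2^{l+\beta}t^{l+\beta}$, the constants $(-2)^{l+\alpha}$ and $2^{l+\beta}$ are independent of $t$ and may be pulled outside the $t$-derivative; likewise the prefactor $(1-x)^{-\alpha}(1+x)^{-\beta}$ becomes $(-2)^{-\alpha}(t-1)^{-\alpha}\,2^{-\beta}t^{-\beta}$.

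The crux is then purely arithmetic: collecting all powers of $2$ and all signs, I expect $(-2)^{-\alpha}(-2)^{l+\alpha}=(-2)^{l}$, $2^{-\beta}2^{l+\beta}=2^{l}$, and the two factors $2^{-l}$ (one from the prefactor $1/2^{l}$, one from $\mathrm{d}^{l}/\mathrm{d}x^{l}$) to contribute $2^{-2l}$; together with the prefactor $(-1)^{l}$ this leaves $(-1)^{l}(-2)^{l}2^{l}2^{-2l}=(-1)^{2l}2^{0}=1$, so that all constants collapse to the single factor $1/l!$. Multiplying through by $(t-1)^{\alpha}t^{\beta}$ then yields exactly the asserted identity.

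The only genuinely delicate point, and the place where I would be careful, is the meaning of the fractional powers $(t-1)^{l+\alpha}$ and $(t-1)^{\alpha}$ when $\alpha\notin\Z$: these are multivalued, and the cancellation $(-2)^{-\alpha}(-2)^{l+\alpha}=(-2)^{l}$ is legitimate only once a single branch is fixed and the convention $(-2(t-1))^{s}=(-2)^{s}(t-1)^{s}$ is applied consistently on the prefactor and inside the bracket. I would therefore either restrict to $t>1$, where every factor is real and unambiguous, or read the identity as one between analytic functions after fixing a branch; by analytic continuation in $(\alpha,\beta)$ it then holds for all $\alpha,\beta\in\R_{>-1}$. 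Apart from this sign-and-branch bookkeeping there is no real obstacle, since the passage $x=2t-1$ is an affine change of variable and both sides are manifestly of the same type.
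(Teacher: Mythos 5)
Your proof is correct and takes essentially the same route as the paper: Fact 2.3 is stated there by direct citation of Szeg\H{o}'s Rodrigues formula (4.3.1), and your substitution $x=2t-1$ with the constant bookkeeping $(-1)^{l}(-2)^{l}2^{l}2^{-2l}=1$ is precisely the (omitted) verification that transports it to the shifted form. Your branch-of-powers caveat is handled sensibly and is in any case harmless for the paper's use of the fact, since in \eqref{JacobiA}--\eqref{JacobiD} only $\alpha,\beta\in\Z_{\ge0}$ occur.
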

Comparing two formulae in Theorem \ref{Rodrigues} and Fact \ref{Jacobi}, we obtain expression of the $f$-polynomials for types $A_l$, $B_l$ and $D_l$ by shifted Jacobi polynomials.
\begin{align}
\label{JacobiA}
f_{A_{l}}(t) & \ =\  \frac{(-1)^{l}}{l+1}\widetilde{P}^{(1, 1)}_{l}(t),\\
\label{JacobiB}
f_{B_{l}}(t) & \ =\  (-1)^{l}\widetilde{P}^{(0, 0)}_{l}(t),\\
\label{JacobiD}
f_{D_{l}}(t) & \ = \ (-1)^{l-1}(1-t)\widetilde{P}^{(1, 0)}_{l-1}(t)+
(-1)^{l}t^2\widetilde{P}^{(0, 2)}_{l-2}(t).
\end{align}
\begin{remark}
From (2.3), we obtain the following expression of the $f$-polynomial for type $D_l$ by shifted Legendre polynomials.
\begin{equation}
(-1)^{l}f_{D_{l}}(t) = \frac{l-2}{2(l-1)}\widetilde{P}^{(0, 0)}_{l}(t) - \frac{l}{2(l-1)}(1-2t)\widetilde{P}^{(0, 0)}_{l-1}(t).
\end{equation}
\end{remark}
\section{Recurrence relations for types $A_l$ ($l\ge1$), $B_l$ ($l\ge2$) and $D_l$
($l\ge4$)}
As an application of the Rodrigues type formulae, we show that the the series of $f$-polynomials for types $A_l$ ($l\ge1$), $B_l$ ($l\ge2$) and $D_l$ satisfy either 3-term or 4-term recurrence relations (Theorem 3.1).
\medskip
\begin{theorem}   
For type $A_l$ and $B_l$, the following $3$-term recurrence relation holds. 
{\small
\begin{equation}
\label{recurrenceAB}
\begin{array}{rl}
\!\!\! (l+4)f_{A_{l+2}}(t) \!\!\! &= (2l+5)(1-2t)f_{A_{l+1}}(t) - (l+1)f_{A_{l}}(t),\\
\!\!\! (l+2)f_{B_{l+2}}(t) \!\!\! & =
 (2l+3)(1-2t)f_{B_{l+1}}(t) - (l+1)f_{B_l}(t).
\end{array}
\end{equation}  
}
For type $D_l$, the following $4$-term recurrence relation holds.
{\small
\begin{equation}
\label{recurrenceD}
f_{D_{l+3}}(t)  = (a_{l} + b_{l} t) f_{D_{l+2}}(t) 
 +(c_{l} + d_{l} t + e_{l} t^2) f_{D_{l+1}}(t) + (f_{l} + g_{l} t)  f_{D_l}(t). 
 \end{equation}
}
Here, $a_{l}$, $b_{l}$, $c_{l}$, $d_{l}$, $e_{l}$, $f_{l}$ and $g_{l}$ are the following rational functions:
 { \small
 \[
  a_{l} = \frac{(l+1)(5l^2+4l-21)}{(l-1)(l+3)(5l+4)},\,\,\,\,\,\,\,\,\,
  \]
  \vspace{0.05cm}
  \[
  b_{l} = -\frac{2(l+1)(5l^2+4l-21)}{(l-1)(l+3)(5l+4)},
  \]
   \vspace{0.05cm}
 \[
  c_{l} = \frac{l(5l^2+14l+5)}{(l-1)(l+3)(5l+4)},\,\,\,\,\,\,\,\,
  \]
  \vspace{0.05cm}
  \[
  d_{l} = -\frac{4l(2l+1)(5l+9)}{(l-1)(l+3)(5l+4)},\,\,\,
  \]
  \vspace{0.05cm}
  \[
  e_{l} =\frac{4l(2l+1)(5l+9)}{(l-1)(l+3)(5l+4)},\,\,\,\,\,\,
 \]
   \vspace{0.05cm}
 \[
  f_{l} = -\frac{(l+1)(5l+9)}{(l+3)(5l+4)},\,\,\,\,\,\,\,\,\,\,\,\,\,\,\,\,\,\,
  \]
  \vspace{0.05cm}
 \[
  g_{l} =\frac{2(l+1)(5l+9)}{(l+3)(5l+4)}.\,\,\,\,\,\,\,\,\,\,\,\,\,\,\,\,\,\,\,
  \]
  }

\end{theorem}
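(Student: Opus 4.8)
The plan is to read every recurrence off the classical three-term recurrence for Jacobi polynomials \cite{[Sz]}(4.5.1), combined with the identifications \eqref{JacobiA}, \eqref{JacobiB}, \eqref{JacobiD} of the $f$-polynomials with shifted Jacobi polynomials. For types $A_l$ and $B_l$ this is a direct change of variable, while for type $D_l$ the additional input is the relation (2.3), which transfers the problem onto the type-$B_l$ recurrence established first.

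\emph{Types $A_l$ and $B_l$.} I begin with \cite{[Sz]}(4.5.1) for $P^{(\alpha,\beta)}_n(x)$ and specialize to the symmetric cases $\alpha=\beta=0$ and $\alpha=\beta=1$; in both the term $\alpha^2-\beta^2$ vanishes, and after removing common factors one is left, in the Legendre case, with
\[
(n+1)P^{(0,0)}_{n+1}(x)=(2n+1)\,x\,P^{(0,0)}_{n}(x)-n\,P^{(0,0)}_{n-1}(x),
\]
and with the analogous relation $(n+1)(n+3)P^{(1,1)}_{n+1}(x)=(n+2)(2n+3)\,x\,P^{(1,1)}_{n}(x)-(n+1)(n+2)P^{(1,1)}_{n-1}(x)$ in the $(1,1)$ case. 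Substituting $x=2t-1$, so that $P^{(\alpha,\beta)}_n(x)=\widetilde P^{(\alpha,\beta)}_n(t)$ and $x=-(1-2t)$, and then inserting \eqref{JacobiB}, i.e. $\widetilde P^{(0,0)}_l=(-1)^lf_{B_l}$ (respectively \eqref{JacobiA}, i.e. $\widetilde P^{(1,1)}_l=(-1)^l(l+1)f_{A_l}$), the factors $(-1)^l$ and the linear factors $l+1$ recombine: the single surviving sign turns $x=-(1-2t)$ into the factor $(1-2t)$, and matching the remaining rational coefficients yields exactly the two three-term recurrences of \eqref{recurrenceAB}.

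\emph{Type $D_l$.} Here I use (2.3) in the form $f_{D_m}=\tfrac{m-2}{2(m-1)}f_{B_m}+\tfrac{m}{2(m-1)}(1-2t)f_{B_{m-1}}$ to rewrite each of $f_{D_{l+3}},f_{D_{l+2}},f_{D_{l+1}},f_{D_l}$ in terms of the five consecutive polynomials $f_{B_{l-1}},\dots,f_{B_{l+3}}$. Applying the type-$B_l$ recurrence just proved, I reduce all of these to $\R[t]$-combinations of one fixed consecutive pair, say $f_{B_l}$ and $f_{B_{l+1}}$; each step away from that pair contributes one factor $(1-2t)$, so $f_{B_{l-1}}$ and $f_{B_{l+2}}$ pick up coefficients of degree one and $f_{B_{l+3}}$ coefficients of degree two. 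After this reduction both sides of \eqref{recurrenceD} take the shape $U(t)f_{B_l}+V(t)f_{B_{l+1}}$ with $U,V\in\R[t]$ whose coefficients are rational in $l$. Since $f_{B_l}$ and $f_{B_{l+1}}$ are coprime of degrees $l$ and $l+1$ while $U,V$ have degree at most three, the identity \eqref{recurrenceD} is equivalent to the pair of polynomial identities obtained by matching the $f_{B_l}$- and the $f_{B_{l+1}}$-coefficients; substituting the prescribed $a_l,\dots,g_l$ and comparing coefficients of $t^0,\dots,t^3$ verifies these.

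The heaviest and only nonroutine point is this last comparison for $D_l$: on the right-hand side the products $(c_l+d_lt+e_lt^2)f_{D_{l+1}}$ and $(f_l+g_lt)f_{D_l}$ contribute to the $f_{B_l}$-coefficient terms of nominal degree three, which must collapse onto the degree-one coefficient coming from $f_{D_{l+3}}$. The mechanism behind these cancellations is the symmetry $t\mapsto 1-t$: the Rodrigues formula \eqref{RodriguesB} gives $f_{B_l}(1-t)=(-1)^lf_{B_l}(t)$, and (2.3) then forces $f_{D_l}(1-t)=(-1)^lf_{D_l}(t)$, so the left side of \eqref{recurrenceD} has parity $(-1)^{l+1}$. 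Compatibility of the right side with this parity is exactly what makes the coefficients of $f_{D_{l+2}}$ and $f_{D_l}$ antisymmetric and that of $f_{D_{l+1}}$ symmetric under $t\mapsto 1-t$, i.e. it forces the relations $b_l=-2a_l$, $e_l=-d_l$, $g_l=-2f_l$ that are visible in the statement and that produce the required telescoping of the top-degree terms. Once this is recorded, only the three genuinely free parameters $a_l$, $c_l$, $f_l$ remain, and the surviving low-degree coefficient identities reduce to a finite rational computation in the single parameter $l$.
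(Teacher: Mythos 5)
Your proposal is correct, and for the $D_l$ case it takes a genuinely different route from the paper. For types $A_l$ and $B_l$ your derivation — Szeg\H{o}'s recurrence (4.5.1) specialized to $(\alpha,\beta)=(1,1)$ and $(0,0)$, shifted by $x=2t-1$ and translated through \eqref{JacobiA}, \eqref{JacobiB} — is essentially what the paper intends (its printed proof is devoted entirely to $D_l$, the $A$/$B$ cases being immediate from the Jacobi identifications); I checked the sign bookkeeping and your two displayed recurrences do reduce exactly to \eqref{recurrenceAB}. For $D_l$, however, the paper verifies \eqref{recurrenceD} coefficientwise: it sets $\mathcal{C}(l,k)=\binom{l}{k}\binom{l+k-1}{k}+\binom{l-2}{k-2}\binom{l+k-2}{k}$, the $k$th coefficient of $f_{D_l}$ up to sign, and checks by direct binomial algebra that the prescribed combination of $\mathcal{C}(l+2,\cdot)$, $\mathcal{C}(l+1,\cdot)$, $\mathcal{C}(l,\cdot)$ collapses to $\mathcal{C}(l+3,k)$ — a two-parameter identity in $(l,k)$. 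Your argument instead expresses all four $f_{D_m}$ in the basis $(f_{B_l},f_{B_{l+1}})$ via (2.3) and the freshly proved $B$-recurrence, splits the identity into two polynomial identities using coprimality of consecutive shifted Legendre polynomials (valid by root interlacing) against the degree bound $\deg U,\deg V\le 3<l$ (which holds since $l\ge 4$ in the $D_l$ range — worth stating explicitly), and uses the $t\mapsto 1-t$ parity of $f_{B_l}$ and $f_{D_l}$ to explain, rather than merely observe, the relations $b_l=-2a_l$, $e_l=-d_l$, $g_l=-2f_l$. What your route buys is conceptual transparency about the shape of the coefficients and a verification that is finite in a stronger sense: a fixed handful of rational-function identities in the single variable $l$, instead of a binomial identity in two parameters. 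What it costs is the same thing the paper's proof costs: the terminal computation is asserted rather than carried out (the paper at least displays its intermediate expression), so if you write this up you should either display the reduced $U$- and $V$-identities or note that they are checked by routine expansion; as a strategy there is no gap.
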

\begin{proof}

Let us consider the $k$th coefficient of $f_{D_{l}}(t)$ up to the sign $(-1)^k$:
\[
\begin{array}{rcl}
\mathcal{C}(l, k)  &:= &\binom {l}{k}\binom {l+k-1}{k} + \binom {l-2}{k-2}\binom {l+k-2}{k}\\
& =&  \frac{(l+k-2)!}{(l-k)!(k!)^2}\big\{  l(l+k-1) + k(k-1) \big\}.
\end{array}
\]
We compute the coefficient of the term $(-t)^k$ on the right hand side of \eqref{recurrenceD}.
 {\small
 \[
\begin{array}{ll}
 a_{l}\cdot \mathcal{C}(l+2, k)- b_{l}\cdot \mathcal{C}(l+2, k-1)+ c_{l}\cdot \mathcal{C}(l+1, k)- d_{l}\cdot \mathcal{C}(l+1, k-1) \\
\vspace{0.05cm}
 + e_{l}\cdot \mathcal{C}(l+1, k-2) + f_{l}\cdot \mathcal{C}(l, k) - g_{l}\cdot \mathcal{C}(l, k-1) \\
\vspace{0.05cm}
=  \large \frac{(l+k-3)!}{(l+3-k)!(k!)^2(l-1)(l+3)(5l+4)}  \Bigl[ (l + 1)(5 l^2 + 4 l - 21)(l + k)(l + k - 1)(l + k - 2)\\
\vspace{0.05cm}
 \times(l - k + 3)\{(l + 2)(l + k + 1) + k (k - 1)\}\\
\vspace{0.05cm}
 + 2 (l + 1)(5 l^2 + 4 l - 21)k^{2}(l + k - 1)(l + k - 2)\\
\vspace{0.05cm}
 \times\{(l + 2)(l + k) + (k - 1)(k - 2)\} \\
\vspace{0.05cm}
 + l(5 l^2 + 14 l + 5)(l - k + 3)(l - k + 2)(l + k - 1)(l + k - 2)\\
\vspace{0.05cm}
 \times \{(l + 1)(l + k) + k(k - 1)\} \\
\vspace{0.05cm}
 +  4 l(2 l + 1)(5 l + 9) k^{2}(l + k - 2)(l - k +  3)\\
\vspace{0.05cm}
 \times \{(l + 1)(l + k - 1) + (k - 1)(k - 2)\} \\
\vspace{0.05cm}
 + 4 l(2 l + 1)(5 l + 9) k^{2}(k - 1)^{2}\\
\vspace{0.05cm}
 \times \{(l + 1)(l + k - 2) + (k - 2)(k - 3)\} \\
\vspace{0.05cm}
 - (l -  1)(l + 1)(5 l + 9)(l + k - 2)(l - k + 3)(l - k + 2)(l - k +  1)\\
\vspace{0.05cm}
 \times \{l(l + k - 1) + k(k - 1)\}\\
\vspace{0.05cm}
 - 2 (l - 1)(l + 1)(5 l + 9)(l - k + 3)(l - k + 2)k^2\\
\vspace{0.05cm}
 \times \{l(l + k - 2) + (k - 1)(k - 2)\} \Bigr] \\
\vspace{0.05cm}
= \frac{(l+k+1)!}{(l+3-k)!(k!)^2} (6+2k+ k^{2}+5l +kl+l^{2}) \\
\vspace{0.05cm}
= \mathcal{C}(l+3, k).
\end{array}
\]}

\end{proof}
As an application of the recurrence relation, we observe the following.

\noindent
\begin{corollary} For each types $P_{l} = A_l (l\ge1), B_l (l\ge2)$, the $f$-polynomial $f_{P_{l}}(t)$ is divisible by $2t-1$ if and only if $l$ is odd.  
\end{corollary}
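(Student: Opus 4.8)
The plan is to reduce divisibility by $2t-1$ to a single numerical evaluation and then propagate that condition through the three-term recurrences of Theorem 3.1. Since $2t-1$ is primitive, Gauss's lemma shows that for $f \in \Z[t]$ the divisibility $(2t-1)\mid f$ over $\Z$ is equivalent to divisibility over $\Q$, hence by the factor theorem to the vanishing $f(1/2)=0$. So it suffices to determine, within each series, the parities of $l$ for which $f_{A_l}(1/2)=0$ and $f_{B_l}(1/2)=0$.

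The key point is that the linear factor $(1-2t)$ appearing in \eqref{recurrenceAB} vanishes exactly at $t=1/2$. Evaluating both recurrences there therefore annihilates the middle term and collapses each three-term relation into a two-step one:
\[
(l+4)\,f_{A_{l+2}}(1/2) = -(l+1)\,f_{A_l}(1/2), \qquad (l+2)\,f_{B_{l+2}}(1/2) = -(l+1)\,f_{B_l}(1/2).
\]
For every $l$ in the relevant ranges the factors $l+4$, $l+2$, $l+1$ are strictly positive, so each identity gives $f_{P_{l+2}}(1/2)=0$ if and only if $f_{P_l}(1/2)=0$. Thus vanishing at $t=1/2$ depends only on the residue of $l$ modulo $2$ within each family, and the corollary reduces to checking one even and one odd base case per series.

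It then remains to evaluate the base cases, which I would read off from Table A: $f_{A_1}(t)=1-2t$ and $f_{A_2}(t)=1-5t+5t^2$ give $f_{A_1}(1/2)=0$ and $f_{A_2}(1/2)=-1/4\neq 0$, while $f_{B_2}(t)=1-6t+6t^2$ and $f_{B_3}(t)=1-12t+30t^2-20t^3$ give $f_{B_2}(1/2)=-1/2\neq0$ and $f_{B_3}(1/2)=0$. Inducting with the two-step recurrences then yields $f_{A_l}(1/2)=0\iff l$ odd and $f_{B_l}(1/2)=0\iff l$ odd, which is the assertion. There is no genuinely hard step here; the only points requiring care are confirming that the recurrence coefficients never vanish (so that the \emph{equivalence}, not merely one implication, is transmitted) and the arithmetic of the four base evaluations. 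As an independent check one may instead use \eqref{JacobiA} and \eqref{JacobiB}: from $f_{A_l}(1/2)=\tfrac{(-1)^l}{l+1}P^{(1,1)}_l(0)$ and $f_{B_l}(1/2)=(-1)^lP^{(0,0)}_l(0)$, together with the parity relation $P^{(\alpha,\alpha)}_l(-x)=(-1)^lP^{(\alpha,\alpha)}_l(x)$, one gets $P^{(\alpha,\alpha)}_l(0)=0$ exactly when $l$ is odd, recovering the same dichotomy.
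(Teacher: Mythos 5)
Your proposal is correct and is essentially the paper's own argument: the corollary appears immediately after Theorem 3.1 with the words ``as an application of the recurrence relation,'' and the intended proof is precisely your observation that setting $t=1/2$ kills the middle term $(1-2t)$ in \eqref{recurrenceAB}, leaving two-step recurrences with nonvanishing coefficients that propagate the vanishing (and nonvanishing) of $f_{P_l}(1/2)$ from the base cases $A_1,A_2,B_2,B_3$. Your preliminary reduction via Gauss's lemma and the closing cross-check through \eqref{JacobiA}--\eqref{JacobiB} are sound additions, but they do not change the route.
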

Therefore, due to (2.3), we obtain the following.
\begin{remark} The $f$-polynomial $f_{D_{l}}(t)$ ($l\ge4$) is divisible by $2t-1$ if and only if $l$ is odd. 
\end{remark}
Although for each types $P_{l} = A_l (l\ge1), B_l (l\ge2)$ the $f$-polynomial $f_{P_{l}}(t)$ is a solution of the Gauss hypergeometric differential equation, the $f$-polynomial $f_{D_{l}}(t)$ is a solution of the following Fuchsian equation of third-order.
\begin{remark} The $f$-polynomial $f_{D_{l}}(t)$ satisfies the following Fuchsian ordinary differential equation of third-order. The proof is left to the reader.
{\footnotesize\[
 t(t-1)(2t-1)\frac{ {\mathrm{d}}^{3}y }{ \mathrm{d}t^3 } + \{(l+6)(t^2-t)+2\}\frac{ {\mathrm{d}}^{2}y }{ \mathrm{d}t^2 }-l(l-1)(2t-1)\frac{ \mathrm{d}y }{ \mathrm{d}t }-l(l-1)(l+2)y=0.
\]}
\end{remark}
\section{Proof of Conjecture 2 except for types $D_l$ }

In the present section, we prove, except for types $D_l$, the following theorem, which approves Conjecture 2. The proof for types $D_l$ will be given in the next section 5. 

\medskip
\begin{theorem} 
\label{rootsP}
 The $f$-polynomial $f_{P}(t)$ for any finite type $P$ has $\rank(P)$ simple roots on the interval $(0, 1)$. 
\end{theorem}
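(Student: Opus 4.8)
The plan is to prove Theorem \ref{rootsP} type by type, exploiting the explicit identifications of the $f$-polynomials with classical orthogonal polynomials established in \eqref{JacobiA}--\eqref{JacobiD}. The cleanest cases are the infinite series $A_l$ and $B_l$: by \eqref{JacobiA} and \eqref{JacobiB}, up to a nonzero constant factor $f_{A_l}(t)$ equals the shifted Jacobi polynomial $\widetilde{P}^{(1,1)}_{l}(t)$ and $f_{B_l}(t)$ equals the shifted Legendre polynomial $\widetilde{P}^{(0,0)}_{l}(t)$. It is a classical fact (\cite{[Sz]}) that a Jacobi polynomial $P^{(\alpha,\beta)}_{l}(x)$ with $\alpha,\beta>-1$ has exactly $l$ simple real zeros in the open interval $(-1,1)$. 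Since the shift $x=2t-1$ is an affine bijection carrying $(-1,1)$ onto $(0,1)$, the zeros of $f_{A_l}(t)$ and $f_{B_l}(t)$ are exactly $l$ simple real zeros in $(0,1)$, settling these two series immediately.

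For the finitely many remaining types --- the exceptionals $E_6,E_7,E_8,F_4,G_2$ and the noncrystallographic $H_3,H_4,I_2(p)$ --- there is no infinite family to handle uniformly, so I would treat each polynomial as an explicit concrete polynomial of known degree, read off from Table A (and Table B for $I_2(p)$). For these I would verify the root count via \emph{Sturm's theorem}. Concretely, for each such $f_P(t)$ I construct the Sturm sequence $f_P=p_0,\ p_1=f_P',\ p_2,\ldots$ by the signed-remainder Euclidean algorithm, and count the number of sign changes $V(0)$ and $V(1)$ at the two endpoints of the interval; Sturm's theorem (\cite{[T]} Theorem 3.3) then guarantees that the number of distinct real roots in $(0,1)$ equals $V(0)-V(1)$, and one checks this equals $l=\rank(P)$. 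Simplicity of the roots is obtained simultaneously: since a repeated root would be a common zero of $f_P$ and $f_P'$, it suffices to confirm that $\gcd(f_P,f_P')$ is constant, which is exactly the condition that the Sturm sequence terminates in a nonzero constant $p_r$; equivalently one checks that $f_P(t)$ is squarefree. One must also confirm that neither endpoint $t=0$ nor $t=1$ is itself a root, so that all $l$ roots lie in the \emph{open} interval --- $f_P(0)=f_{-1}(P)=1\ne0$ is immediate from the definition, and $f_P(1)\ne0$ follows by direct evaluation from Table A (note the contrast with the $f^+$-polynomials, which do vanish at $t=1$).

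The family $I_2(p)$ deserves a brief separate word, since although it is noncrystallographic of fixed rank $l=2$ it carries a parameter $p$. Here $f_{I_2(p)}(t)$ is a concrete quadratic in $t$ whose coefficients depend on $p$, so rather than Sturm's theorem one can simply inspect its discriminant and its values at $t=0$ and $t=1$ as functions of $p\ge3$, verifying that the two roots are real, distinct, and lie strictly inside $(0,1)$. The main obstacle in the whole argument is not conceptual but computational bookkeeping: the correctness of the Sturm counts rests on evaluating the signs of the Sturm sequences at the endpoints for each exceptional type, and these computations (carried out with the aid of the coefficient tables) must be performed and recorded accurately. I expect no difficulty of principle, and I emphasize that this theorem deliberately omits type $D_l$, where the $f$-polynomial is only expressed as a \emph{sum} of two shifted Jacobi polynomials in \eqref{JacobiD} rather than a single one; the loss of the direct orthogonal-polynomial identification is precisely why $D_l$ requires the separate intermediate-value argument of \S5.
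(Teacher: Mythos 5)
Your proposal is correct and follows essentially the same route as the paper: types $A_l$ and $B_l$ are settled via the identifications \eqref{JacobiA}--\eqref{JacobiB} with shifted Jacobi polynomials, whose $l$ zeros are simple and lie in $(0,1)$ by Szeg\H{o}'s classical results, and the remaining exceptional and noncrystallographic types are handled by Sturm sequences built from $f_P$ and $f_P'$, with sign counts at $t=0$ and $t=1$ verified case by case (the paper's Fact 4.2). Your only deviation---treating the one-parameter family $I_2(p)$ by direct discriminant and endpoint inspection instead of folding it into the case-by-case Sturm computation---is a minor, arguably cleaner, piece of bookkeeping rather than a different argument, and your explicit endpoint checks $f_P(0)=1\ne0$, $f_P(1)\ne0$ make precise what the paper's alternating-sign condition handles implicitly.
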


\begin{proof}
{\bf Case I:  type} $A_l$ ($l\in \Z_{\ge1}$) and $B_l$ ($l\in\Z_{\ge1}$).   

This is an immediate consequence of the formulae \eqref{JacobiA} and \eqref{JacobiB}, since the Jacobi polynomials $\widetilde{P}_{l}^{(1, 1)}$ and $\widetilde{P}_{l}^{(0, 0)}$ are well known to have $l$ simple roots on the interval $(0, 1)$  (see \cite{[Sz]} Theorem 3.3.1).

\medskip
\noindent
{\bf Case II: Exceptional types and non-crystallographic types}    

We apply the Euclid division algorithm for the pair of polynomials $f_0:= f_{P}$ and $f_1:=f'_{P}$. So, we obtain, a sequence $f_0,\ f_1,\ f_2, \ldots$ of polynomials in $t$ such that $f_{k-1}=f_k\cdot q_{k-1}+f_{k+1}$ for $k=1, 2, \ldots$ (where $q_{k-1}$ is the quotient and $f_{k+1}$ is the remainder). 

Then, we prove the following fact by direct calculations case by case.

\begin{fact} 
i) The degrees of the sequence $f_0,\ f_1,\ f_2, \ldots$ of polynomials descend one by one, and $f_{l}$ is a non-zero constant.

ii) The sequence $f_0(0), f_1(0), -f_2(0), \ldots, (-1)^{l-1}f_{l}(0)$ has constant sign and the sequence $f_0(1), f_1(1), -f_2(1), \ldots, (-1)^{l-1}f_{l}(1)$ has alternating sign.
\end{fact}

Applying the Sturm theorem (see for instance \cite{[T]} Theorem 3.1), we observe that $f_0$ has $l$ distinct roots on the interval $(0, 1)$.

This completes a proof of Theorem \ref{rootsP}.
\end{proof}

\section{Proof of Conjecture 2 for types $D_l \, (l\ge4)$}

In this section, we prove the following theorem, which answers to Conjecture 2 for the types $D_l$ ($l\ge4$) affirmatively.

\begin{theorem}
\label{rootsD}
The polynomial $f_{D_l}(t)$ has $l$ simple roots on the interval $(0, 1)$.
\end{theorem}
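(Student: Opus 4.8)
The plan is to feed the identity (2.3),
\[
f_{D_l}(t) = \tfrac{l-2}{2(l-1)}\,f_{B_l}(t) + \tfrac{l}{2(l-1)}\,(1-2t)\,f_{B_{l-1}}(t),
\]
into the intermediate value theorem, using the zeros of $f_{B_l}$ as test points. By \eqref{JacobiB} the polynomials $f_{B_l}$ and $f_{B_{l-1}}$ are, up to sign, the shifted Legendre polynomials $\widetilde P^{(0,0)}_l$ and $\widetilde P^{(0,0)}_{l-1}$; by Case I of Theorem~\ref{rootsP} each has only simple zeros in $(0,1)$, and by the classical interlacing of the zeros of consecutive orthogonal polynomials these zeros strictly interlace. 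Write $0<x_1<\dots<x_l<1$ for the zeros of $f_{B_l}$. Since the first summand of (2.3) vanishes at each $x_j$, we have $f_{D_l}(x_j)=\tfrac{l}{2(l-1)}(1-2x_j)\,f_{B_{l-1}}(x_j)$, so the sign of $f_{D_l}(x_j)$ is governed by the two factors $(1-2x_j)$ and $f_{B_{l-1}}(x_j)$.

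First I would record the signs. Interlacing forces $f_{B_{l-1}}(x_j)$ to alternate strictly in $j$, while $(1-2x_j)$ is positive for $x_j<\tfrac12$ and negative for $x_j>\tfrac12$. Consequently, for two consecutive zeros lying on the \emph{same} side of $\tfrac12$ the values $f_{D_l}(x_j)$ and $f_{D_l}(x_{j+1})$ have opposite signs, whereas for the single pair straddling $t=\tfrac12$ the extra sign flip of $(1-2t)$ exactly cancels the alternation, so those two values share a sign. Applying the intermediate value theorem on each of the $l-2$ intervals $(x_j,x_{j+1})$ not containing $\tfrac12$ produces $l-2$ distinct zeros of $f_{D_l}$ in $(x_1,x_l)\subset(0,1)$.

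It then remains to capture the two missing zeros, which sit in the central interval around $t=\tfrac12$; here I would exploit the symmetry $f_{D_l}(1-t)=(-1)^l f_{D_l}(t)$ (inherited from $\widetilde P^{(0,0)}_n(1-t)=(-1)^n\widetilde P^{(0,0)}_n(t)$ via (2.3)) and split on the parity of $l$. If $l$ is even then $t=\tfrac12$ is not a zero, and $f_{D_l}(\tfrac12)=\tfrac{l-2}{2(l-1)}f_{B_l}(\tfrac12)$ has sign opposite to $f_{D_l}$ at the two zeros $x_{l/2},x_{l/2+1}$ flanking $\tfrac12$, so the intermediate value theorem yields one zero in $(x_{l/2},\tfrac12)$ and one in $(\tfrac12,x_{l/2+1})$. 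If $l$ is odd then $t=\tfrac12$ is itself a zero (recall that $2t-1$ divides $f_{D_l}$ precisely when $l$ is odd), and I would detect the remaining symmetric pair by evaluating $f_{D_l}$ at the two zeros of $f_{B_{l-1}}$ nearest $\tfrac12$, where the \emph{second} summand of (2.3) vanishes and $f_{D_l}=\tfrac{l-2}{2(l-1)}f_{B_l}$ has a sign forced by the interlacing pattern. In either case this supplies two further zeros in $(0,1)$; since $\deg f_{D_l}=l$ and we have exhibited $l$ distinct zeros in $(0,1)$, these are all the zeros and each is simple.

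The main obstacle is exactly the behaviour at $t=\tfrac12$: the sign change of $(1-2t)$ destroys the clean alternation on the central interval, so the $f_{B_l}$-test points alone detect only $l-2$ of the $l$ zeros. The delicate part will therefore be the parity-sensitive bookkeeping that pins the last two zeros against $t=\tfrac12$ — using $f_{D_l}(\tfrac12)$ when $l$ is even, and the flanking zeros of $f_{B_{l-1}}$ together with the divisibility by $2t-1$ when $l$ is odd — and verifying that the relevant signs genuinely oppose, which hinges on the exact location of the middle zeros of $\widetilde P^{(0,0)}_l$ relative to $\tfrac12$.
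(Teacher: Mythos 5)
Your proposal is correct and takes essentially the same route as the paper's own proof: both feed the sign data coming from the interlacing of the zeros of the shifted Legendre polynomials and the symmetry $f_{D_l}(t)=(-1)^{l}f_{D_l}(1-t)$ into the identity (2.3), apply the intermediate value theorem, and finish with a degree count, the only difference being bookkeeping --- the paper brackets each root in a mixed interval with one endpoint a zero of $f_{B_l}$ and the other a zero of $f_{B_{l-1}}$, whereas you test at consecutive zeros of $f_{B_l}$ and treat the central interval separately via $t=\tfrac12$. One tiny slip: for odd $l$ your uniform count of ``$l-2$ intervals'' should be $l-3$, since $f_{D_l}$ vanishes at the test point $x_{(l+1)/2}=\tfrac12$ itself, but your odd-case procedure ($l-3$ lateral zeros, the zero at $\tfrac12$, and the symmetric flanking pair) still totals the required $l$ zeros, so this does not affect correctness.
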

\begin{proof}
First, we recall that the shifted Legendre polynomial {\small$\widetilde{P}_{l}^{(0, 0)}(t)$} ({\small$= (-1)^{l}f_{B_l}(t)$}) satisfies the functional equation {\small$\widetilde{P}_{l}^{(0, 0)}(t)=(-1)^{l}\widetilde{P}_{l}^{(0, 0)}(1-t)$}. Hence, thanks to (2.3),.
\begin{lem}
$f_{D_l}(t)=(-1)^{l}f_{D_l}(1-t)$.
\end{lem}
Secondly, let $t_{B_{l}, \nu}, \nu=1, 2, \ldots, l$, be the zeros of $f_{B_l}(t)$ in decreasing order (i.e. $1 > t_{B_{l}, 1} > t_{B_{l}, 2}  > \cdots > t_{B_{l}, l} > 0$). Then, the following fact is known (see \cite{[Sz]} Theorem 3.3.2.).
\begin{fact} 
The system $\{ t_{B_{l}, \nu} \}_{\nu=1}^{l}$ alternates with the system $\{ t_{B_{l+1}, \nu} \}_{\nu=1}^{l+1}$, that is,
\[
t_{B_{l+1}, \nu} > t_{B_{l}, \nu} > t_{B_{l+1}, \nu+1}, \,\,\,\,(\nu= 1, \ldots, l).
\]
\end{fact}
\noindent
{\bf Case I: }$l = 2k$\\   
We consider $2k$ open intervals $(t_{B_{2k}, 2k+1-\nu}, t_{B_{2k-1}, 2k-\nu}) (\nu=1, 2, \ldots, k)$ and \\$(t_{B_{2k-1}, k+1-\nu}, t_{B_{2k}, k+1-\nu}) (\nu=1, 2, \ldots, k)$. We note that $t_{B_{2k-1}, k} = 1/2$. On the intervals $(t_{B_{2k}, 2k+1-\nu}, t_{B_{2k-1}, 2k-\nu}) (\nu=1, 2, \ldots, k)$, the polynomials {\small$f_{B_{2k}}(t)$} and {\small$f_{B_{2k-1}}(t)$} have the opposite sign. Moreover, due to the identity (2.3), we can show\\ {\small$f_{D_{2k}}(t_{B_{2k}, 2k+1-\nu})f_{D_{2k}}(t_{B_{2k-1}, 2k-\nu}) < 0$}, $\nu=1, \ldots, k$. Thanks to intermediate value theorem, for each interval $(t_{B_{2k}, 2k+1-\nu}, t_{B_{2k-1}, 2k-\nu}) (\nu=1, 2, \ldots, k)$, there exists at least one root of $f_{D_{2k}}(t)$. Due to Lemma 5.2, we have that for each interval $(t_{B_{2k-1}, k+1-\nu}, t_{B_{2k}, k+1-\nu})  (\nu=1, 2, \ldots, k)$, there exists at least one root of {\small$f_{D_{2k}}(t)$}. Since the polynomial $f_{D_{2k}}(t)$ is of precise degree $2k$, we conclude that, in each intervals $(t_{B_{2k}, 2k+1-\nu}, t_{B_{2k-1}, 2k-\nu}) (\nu=1, 2, \ldots, k)$ and $(t_{B_{2k-1}, k+1-\nu}, t_{B_{2k}, k+1-\nu})\\ (\nu=1, 2, \ldots, k)$, there is one and only one root of the polynomial {\small$f_{D_{2k}}(t)$}.\\
\medskip
\noindent
{\bf Case II:} $l = 2k+1$\\    
In a similar manner to Case I, we conclude that the polynomial {\small$f_{D_l}(t)$} has $l$ simple roots on the interval $(0, 1)$.

\end{proof}
From the discussion in the proof of Theorem 5.1, we have shown the following.
\begin{corollary}
1.  For the case where $l = 2k$, we obtain the following properties for $\nu = 1, 2, \ldots, k$:
\[
t_{D_{2k}, 2k+1-\nu}\in (t_{B_{2k}, 2k+1-\nu}, t_{B_{2k-1}, 2k-\nu}),
\]
\[
t_{D_{2k}, k+1-\nu}\in (t_{B_{2k-1}, k+1-\nu}, t_{B_{2k}, k+1-\nu}).
\]
2.  For the case where $l = 2k+1$, we obtain the following properties for $\nu = 1, 2, \ldots, k$:
\[
t_{D_{2k+1}, 2k+2-\nu}\in (t_{B_{2k+1}, 2k+2-\nu}, t_{B_{2k}, 2k+1-\nu}),\, t_{D_{2k+1}, k+1}=1/2,
\]
\[
t_{D_{2k+1}, k+1-\nu}\in (t_{B_{2k}, k+1-\nu}, t_{B_{2k+1}, k+1-\nu}).\,\,\,\,\,\,\,\,\,\,\,\,\,\,\,\,\,\,\,\,\,\,\,\,\,\,\,\,\,\,\,\,\,\,\,\,\,\,\,\,\,\,\,\,\,\,\,\,\,\,\,\,\,\,
\]
\end{corollary}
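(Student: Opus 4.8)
The plan is to read off the corollary as a bookkeeping consequence of the proof of Theorem~\ref{rootsD}. That proof has already trapped each zero of $f_{D_l}(t)$ inside one of $l$ prescribed, pairwise disjoint open intervals; what remains is only to sort these intervals from right to left and identify, for the zeros listed in decreasing order, which zero sits in which interval.

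First I would make the disjointness and ordering of the intervals explicit for $l=2k$. The intervals furnished by the proof are the $k$ ``upper'' ones $(t_{B_{2k-1},i},t_{B_{2k},i})$ with $i=1,\dots,k$ and the $k$ ``lower'' ones $(t_{B_{2k},j},t_{B_{2k-1},j-1})$ with $j=k+1,\dots,2k$. Feeding in the interlacing $t_{B_{2k},\nu}>t_{B_{2k-1},\nu}>t_{B_{2k},\nu+1}$ recalled in the proof, the complete decreasing string of the zeros of $f_{B_{2k}}$ and $f_{B_{2k-1}}$ is
\[
t_{B_{2k},1}>t_{B_{2k-1},1}>t_{B_{2k},2}>\cdots>t_{B_{2k-1},2k-1}>t_{B_{2k},2k},
\]
and the listed intervals are exactly selected consecutive gaps of this string, each separation being one of the interlacing inequalities; in particular they are pairwise disjoint, the upper ones all lie above $t=1/2=t_{B_{2k-1},k}$, and the lower ones all lie below it.

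Next I would match zeros to intervals. Since $f_{D_{2k}}(t)$ has precisely $2k$ simple zeros and the proof of Theorem~\ref{rootsD} places at least one zero in each of the $2k$ disjoint intervals, there is exactly one zero per interval. Ordering the intervals from right to left (the upper ones for $i=1,\dots,k$, then the lower ones for $j=k+1,\dots,2k$) and using $t_{D_{2k},1}>\cdots>t_{D_{2k},2k}$, the zero in the $p$-th interval from the right is forced to be $t_{D_{2k},p}$. This yields $t_{D_{2k},m}\in(t_{B_{2k-1},m},t_{B_{2k},m})$ for $m=1,\dots,k$ and $t_{D_{2k},j}\in(t_{B_{2k},j},t_{B_{2k-1},j-1})$ for $j=k+1,\dots,2k$, which under the substitutions $m=k+1-\nu$ and $j=2k+1-\nu$ are precisely the two displayed inclusions of part~1. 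For $l=2k+1$ I would run the identical argument with $B_{2k+1},B_{2k}$ replacing $B_{2k},B_{2k-1}$; the one new feature is the central zero: by Lemma~5.2 one has $f_{D_{2k+1}}(1-t)=-f_{D_{2k+1}}(t)$, so $t=1/2$ is a zero, and by the symmetry of all the zeros about $1/2$ it is the middle one, $t_{D_{2k+1},k+1}=1/2$, while the $k$ zeros above and $k$ below fall into the upper and lower intervals exactly as before, giving part~2.

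The analytic content --- the sign computation showing that $f_{D_l}$ changes sign across each of these intervals, so that the intermediate value theorem applies --- is already discharged in the proof of Theorem~\ref{rootsD}, so the genuine work here is purely combinatorial. Accordingly the main (and only) obstacle is to keep the interlacing inequalities aligned so that the intervals are certified disjoint and correctly ordered, and to avoid off-by-one slips when converting the ``$p$-th interval from the right'' labelling into the shifted indices $2k+1-\nu$ and $k+1-\nu$ (respectively $2k+2-\nu$ and $k+1-\nu$) appearing in the statement.
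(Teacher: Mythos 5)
Your proposal is correct and matches the paper's own treatment: the paper derives this corollary purely by reading off the proof of Theorem~5.1, where each of the $l$ disjoint intervals (ordered via the interlacing $t_{B_{l},\nu}>t_{B_{l-1},\nu}>t_{B_{l},\nu+1}$ of the shifted Legendre zeros) was shown by the sign argument and the intermediate value theorem to contain exactly one zero of $f_{D_l}$. Your explicit bookkeeping of the right-to-left interval ordering, and your use of the symmetry $f_{D_{2k+1}}(t)=-f_{D_{2k+1}}(1-t)$ to pin the middle zero at $t=1/2$ in the odd case, is exactly the content the paper leaves implicit in the phrases ``from the discussion in the proof of Theorem 5.1'' and ``in a similar manner to Case I.''
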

\begin{remark}
For an irreducible (possibly noncrystallographic) root system of rank $l$ of type $P$, F. Chapoton defined Chapoton's $F$-triangle\footnote{In the definition, the sign of variables changes from the original definition by F. Chapoton.} (\cite{[C]}) of type $P$ as
\begin{equation}
F_{P}(x, y):= \sum_{k=0}^{l}\sum_{m=0}^{l}f_{k, m}(-x)^{k}(-y)^{m}
\end{equation}
, where $f_{k, m}$ is the number of faces of the cluster complex $\Delta(P)$ consisting of $k$ positive roots and $m$ negative simple roots. Clearly, $f_{k, m}=0$ unless $k+m \le l$. We note that $f^{+}_{P}(x) = F_{P}(x, 0)$ and $f_{P}(x) = F_{P}(x, x)$. In \cite{[Sa4]}, the author studied the zero locus of Chapoton's $F$-triangle. Based on some numerical experiments, the author gave some conjectures on the zeros of $F$-triangle.
\end{remark}

\section{Proof of Conjecture 3 }

In this section, we prove the following theorem, which approves Conjecture 3.\\
Let us fix notation: for $P\in${\small $\{A_l\ (l\!\ge\!1), B_l\ (l\! \ge\!1), D_l\ (l\! \ge\! 4), E_l\ (l=6, 7, 8), F_4, G_2, H_3,$ $H_4, I_2(p)\ (p\ge3)\}$}, let $t_{P, \nu}, \nu=1, 2, \ldots, l=\rank(P)$, be the zeros of {\small$f_{P}(t)$} in decreasing order (i.e. $1 > t_{P, 1} > t_{P, 2}  > \cdots > t_{P, l} > 0$).\\

\begin{theorem} 
\label{smallest}
For each series of types $P_{l} = A_l, B_l, D_l$, the smallest zero locus of {\small$f_{P_l}(t)$} monotonously decreasingly converges to zero as the rank $l$ tends to infinity.
\end{theorem}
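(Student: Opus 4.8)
The plan is to handle the three infinite series $A_l$, $B_l$, $D_l$ separately, exploiting the explicit identifications with classical orthogonal polynomials established in Section 2, and reducing everything to the $B_l$ (shifted Legendre) case, for which the asymptotic behaviour of the smallest zero is classical. For the series $A_l$, I would first invoke the functional equation (2.1), $(1-t)f_{A_l}(t)=f^+_{A_{l+1}}(t)$: since the factor $1-t$ contributes only the root $t=1$, the remaining roots of $f_{A_l}(t)$ coincide exactly with the roots of $f^+_{A_{l+1}}(t)$ lying in $(0,1)$. In particular the smallest root $t_{A_l,l}$ equals the smallest root of $f^+_{A_{l+1}}(t)$. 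The monotone decreasing convergence to zero of the latter is precisely Theorem 6.1 of \cite{[I-S]}, so Conjecture 3 for $A_l$ follows immediately.

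For the series $B_l$, I would use the identification \eqref{JacobiB}, namely $f_{B_l}(t)=(-1)^l\widetilde P_l^{(0,0)}(t)$, where $\widetilde P_l^{(0,0)}(t)=P_l^{(0,0)}(2t-1)$ is the shifted Legendre polynomial. Thus the smallest zero $t_{B_l,l}$ of $f_{B_l}(t)$ is the image under $t=\tfrac{x+1}{2}$ of the smallest (i.e.\ closest to $-1$) zero $x_{l,l}$ of the Legendre polynomial $P_l^{(0,0)}(x)$. The classical asymptotics for the extreme zeros of Legendre (more generally Jacobi) polynomials, stated in \cite{[Sz]} Theorem 6.21.3, give that $x_{l,l}$ increases monotonically to $-1$ as $l\to\infty$, equivalently that $t_{B_l,l}=\tfrac{x_{l,l}+1}{2}$ decreases monotonically to $0$. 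I would make the monotonicity explicit via the interlacing Fact in Section 5 (the zeros of $f_{B_l}$ interlace those of $f_{B_{l+1}}$, giving $t_{B_{l+1},l+1}<t_{B_l,l}$), so that only convergence to $0$ must be extracted from \cite{[Sz]} Theorem 6.21.3; this settles Conjecture 3 for $B_l$.

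For the series $D_l$, the strategy is to sandwich the smallest root of $f_{D_l}$ between smallest roots of type $B$, using the localization already obtained in Corollary 5.3. From that corollary, the smallest root $t_{D_l,l}$ lies in the interval $(t_{B_l,l},\,t_{B_{l-1},l-1})$ (in the notation of Case I, $t_{D_{2k},2k}\in(t_{B_{2k},2k},t_{B_{2k-1},2k-1})$, and analogously in Case II). Hence
\[
t_{B_l,l} < t_{D_l,l} < t_{B_{l-1},l-1}.
\]
Since both bounding sequences converge monotonically to $0$ by the already-established $B_l$ case, the squeeze theorem forces $t_{D_l,l}\to 0$. For the monotone decrease of $t_{D_l,l}$ itself I would compare consecutive ranks: using the symmetry Lemma 5.2 ($f_{D_l}(t)=(-1)^lf_{D_l}(1-t)$) together with the interlacing of the type-$B$ zeros, one checks that the sandwiching intervals for rank $l+1$ lie to the left of those for rank $l$, giving $t_{D_{l+1},l+1}<t_{D_l,l}$.

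The main obstacle I expect is the $D_l$ monotonicity, rather than convergence. Convergence is an immediate squeeze once the $B_l$ case is in hand, but establishing that $t_{D_l,l}$ is strictly \emph{decreasing} in $l$ requires a genuine comparison of the $D$-roots across ranks, and the two-case parity structure (even versus odd $l$) in Corollary 5.3 means the bounding intervals shift differently depending on parity. I would therefore treat $l$ even and $l$ odd separately, in each case verifying via the interlacing Fact and the symmetry Lemma that the relevant sandwiching interval for rank $l+1$ lies strictly below the one for rank $l$; the delicate point is ensuring the comparison remains valid across the even/odd transition, which is where the functional equation (2.3) expressing $f_{D_l}$ through $f_{B_l}$ and $f_{B_{l-1}}$ must be used to pin down the sign behaviour at the endpoints.
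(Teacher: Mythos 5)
Your proposal is correct and follows essentially the same route as the paper's proof: the functional equation (2.1) together with Theorem 6.1 of \cite{[I-S]} for $A_l$, the shifted-Legendre identification with \cite{[Sz]} Theorem 6.21.3 and interlacing for $B_l$, and the sandwich $t_{D_l,l}\in(t_{B_l,l},\,t_{B_{l-1},l-1})$ from Corollary 5.4 for $D_l$, squeezed by the $B$-case. The one obstacle you anticipate, monotonicity of $t_{D_l,l}$, is in fact immediate and requires no even/odd analysis: for every $l$ (regardless of parity) the smallest root lies in $(t_{B_l,l},\,t_{B_{l-1},l-1})$, and consecutive sandwich intervals abut at the shared endpoint $t_{B_l,l}$, so $t_{D_{l+1},l+1}<t_{B_l,l}<t_{D_l,l}$ directly.
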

\begin{proof}
\smallskip
\noindent
Type $A_l$: Due to Theorem 6.1 in \cite{[I-S]}, the smallest zero locus of {\small$f^{+}_{A_{l+1}}(t)$} monotonously decreasingly converges to zero as the rank $l$ tends to infinity. Hence, from (2.1), we show that the smallest zero locus of $f_{A_l}(t)$ monotonously decreasingly converges to zero.\\
\smallskip
\noindent
Type $B_l$: Recall a fact on the distribution of the zeros of {\small$\widetilde{P}_{l}^{(0, 0)}(t)$} ({\small$= (-1)^{l}f_{B_l}(t)$}) (\cite{[Sz]} Theorem 6.21.3).
\begin{fact}
\label{Bruns}
 Let $\tilde{x}_{\nu} = \tilde{x}_{l, \nu}, \nu=1, 2, \ldots, l$, be the zeros of $\widetilde{P}_{l}^{(0, 0)}(t)$ in decreasing order.    
  Let $\theta_l= \theta_{l, \nu} \in (0, \pi), \nu=1, 2, \ldots, l$, be the real number defined by
\[
\cos \theta_{\nu}= 2 \tilde{x}_{\nu} - 1.
\]
Then, the inequalities hold as follows:\quad 
$ 
\frac{\nu - \frac{1}{2}}{l + \frac{1}{2}}\pi < \theta_{\nu} < \frac{\nu}{l + \frac{1}{2}}\pi \,\,\,\,\quad (\nu=1, 2, \ldots, l). 
$ 
\end{fact}
Therefore, we have that the smallest zero locus of {\small$f_{B_l}(t)$} monotonously decreasingly converges to zero.\\
\smallskip
\noindent
Type $D_l$: Due to Corollary 5.4, we show that the smallest zero locus $t_{D_{l}, l}$ of {\small$f_{D_l}(t)$} is an element of the interval $(t_{B_{l}, l}, t_{B_{l-1}, l-1})$. Therefore, we conclude that the smallest zero locus $t_{D_{l}, l}$ monotonously decreasingly converges to zero.
\end{proof}

\section{Proof of Conjecture 4 except for types $D_l$}

In this section, we prove, except for types $D_l$, the following theorem, which approves Conjecture 4. The proof for types $D_l$ will be given in the next section 8.
Let us fix notation: for $P\in${\small $\{A_l\ (l\!\ge\!1), B_l\ (l\! \ge\!1), D_l\ (l\! \ge\! 4), E_l\ (l=6, 7, 8), F_4, G_2, H_3,$ $H_4, I_2(p)\ (p\ge3)\}$}, let $t^{+}_{P, \nu}, \nu=1, 2, \ldots, l=\rank(P)$, be the zeros of $f^{+}_{P}(t)$ in decreasing order (i.e. $1 = t^{+}_{P, 1} > t^{+}_{P, 2}  > \cdots > t^{+}_{P, l} > 0$).\\
\begin{theorem} 
The following inequalities hold for any type $P$
\[
t_{P, \nu} > t^{+}_{P, \nu+1} >t_{P, \nu+1} , \,\,\,\,(\nu= 1, \ldots, l-1).
\]

\end{theorem}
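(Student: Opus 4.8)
The plan is to prove the interlacing statement separately for each infinite series and for the finitely many exceptional and noncrystallographic types, exploiting the explicit relations between $f$-polynomials and $f^{+}$-polynomials established in Proposition 2.1. The common idea throughout is the classical fact that if two polynomials are related by a simple linear combination, one can read off the alternation of their roots from sign changes at the known roots of one of them, applied via the intermediate value theorem.

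\textbf{Type $A_l$.} For this series the relation (2.1), $(1-t)f_{A_l}(t) = f^{+}_{A_{l+1}}(t)$, does almost all the work. First I would observe that the roots of $f^{+}_{A_{l+1}}(t)$ are precisely $t=1$ together with the $l$ roots of $f_{A_l}(t)$. Writing the roots of $f^{+}_{A_{l+1}}$ in decreasing order as $1=t^{+}_{A_{l+1},1}>t^{+}_{A_{l+1},2}>\cdots>t^{+}_{A_{l+1},l+1}>0$, the factor $(1-t)$ contributes exactly the root $t=1=t^{+}_{A_{l+1},1}$, so that $t^{+}_{A_{l+1},\nu+1}=t_{A_l,\nu}$ for $\nu=1,\ldots,l$. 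This is the identification between the two series already used in the Introduction; the interlacing for type $A_l$ is then the interlacing of $f^{+}_{A_{l+1}}$ with itself shifted, and I would quote the relevant statement (Proposition 6.6 in \cite{[I-S]}) to conclude.

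\textbf{Type $B_l$.} Here I would use the identification (\ref{JacobiB}), $f_{B_l}(t)=(-1)^l\widetilde{P}^{(0,0)}_l(t)$, together with the relation (2.2), $f_{B_l}(t)+f_{B_{l-1}}(t)=2f^{+}_{B_l}(t)$. The strategy is to evaluate the left-hand side at the known roots of $f_{B_l}$ and of $f_{B_{l-1}}$. At a root $t_{B_l,\nu}$ of $f_{B_l}$ the relation gives $f_{B_{l-1}}(t_{B_l,\nu})=2f^{+}_{B_l}(t_{B_l,\nu})$, and similarly $f_{B_l}(t_{B_{l-1},\nu})=2f^{+}_{B_l}(t_{B_{l-1},\nu})$; combined with the interlacing of the shifted Legendre zeros (Fact 5.3), the signs of $f^{+}_{B_l}$ at the interlaced points $t_{B_l,\nu}$ and $t_{B_{l-1},\nu}$ alternate, forcing a root of $f^{+}_{B_l}$ strictly between consecutive $f_{B_l}$-roots. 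Since the number of sign changes produced this way already accounts for all $l-1$ nontrivial roots $t^{+}_{B_l,\nu+1}$, the interlacing follows. This is exactly the reduction the Introduction announces, invoking Proposition 6.6 in \cite{[I-S]}.

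\textbf{Exceptional and noncrystallographic types.} For $E_6,E_7,E_8,F_4,G_2,H_3,H_4$ and $I_2(p)$ there is no infinite family, so I would proceed by finite verification. Using the explicit coefficient tables (Table A and Table B) I would, for each fixed type, evaluate $f^{+}_P(t)$ at the $l$ known roots $t_{P,\nu}$ of $f_P(t)$ (whose existence and simplicity are guaranteed by Theorem \ref{rootsP}), and check that $f^{+}_P$ changes sign on each interval $(t_{P,\nu+1},t_{P,\nu})$; equivalently, that the product $f^{+}_P(t_{P,\nu})\,f^{+}_P(t_{P,\nu+1})<0$. Since $f^{+}_P$ has exactly $l-1$ roots in $(0,1)$ besides the root at $t=1$, locating one root in each of the $l-1$ gaps between successive $t_{P,\nu}$ exhausts them and yields the claimed alternation. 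The infinite family $I_2(p)$ is handled uniformly in $p$ by the degree-two formulas, where the roots can be written in closed form and compared directly.

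The main obstacle I expect is type $B_l$: the $A_l$ case is essentially a restatement, and the exceptional cases are finite checks, but for $B_l$ one must argue uniformly in $l$ that the sign pattern of $f^{+}_{B_l}$ at the two interlaced Legendre spectra really produces a root in \emph{every} gap and no roots are lost or doubled. The delicate point is controlling the behavior near the symmetric center $t=1/2$ and near the endpoints, and ensuring the count of sign changes matches exactly the degree $l-1$ of $f^{+}_{B_l}(t)/(1-t)$; this is where the quoted Proposition 6.6 of \cite{[I-S]}, rather than a fresh sign-chasing argument, becomes the cleanest route.
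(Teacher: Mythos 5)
Your overall architecture matches the paper's: reduce $A_l$ via the identity $(1-t)f_{A_l}(t)=f^{+}_{A_{l+1}}(t)$, handle $B_l$ through the shifted-Legendre identification, and dispose of the finitely many exceptional and noncrystallographic types by explicit verification. Within that frame you deviate in two places, both defensibly. For $B_l$ the paper simply invokes Proposition 6.6 of \cite{[I-S]} after identifying $f_{B_l}$ with $(-1)^{l}\widetilde{P}^{(0,0)}_{l}$; your sign-chasing argument via (2.2) is a correct, self-contained substitute: at the zeros $t_{B_l,\nu}$ one has $2f^{+}_{B_l}(t_{B_l,\nu})=f_{B_{l-1}}(t_{B_l,\nu})$, strict interlacing of consecutive Legendre zeros (Fact 5.3) forces these values to alternate in sign, the intermediate value theorem places a root of $f^{+}_{B_l}$ in each of the $l-1$ gaps, and the degree count together with $f^{+}_{B_l}(1)=0$ (itself immediate from (2.2), since $f_{B_l}(1)=(-1)^{l}$) exhausts all roots — so the worry you raise about the center $t=1/2$ and the endpoints resolves itself automatically, and no appeal to Proposition 6.6 is actually needed. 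For the exceptional types your implementation differs from the paper's in a way that matters for rigor: the paper never evaluates $f^{+}_P$ at the algebraic numbers $t_{P,\nu}$; it builds Sturm sequences for both $f_P$ and $f^{+}_P(t)/(1-t)$ and exhibits explicit \emph{rational} separators $\alpha_{P,\nu},\alpha^{+}_{P,\nu}$ (Fact 7.3 and the list after Theorem 7.1) whose sign-change counts pin each $t_{P,\nu}$ and $t^{+}_{P,\nu}$ into alternating intervals, giving an exactly checkable certificate; your plan to verify $f^{+}_P(t_{P,\nu})\,f^{+}_P(t_{P,\nu+1})<0$ at irrational roots needs an extra layer (certified root isolation, resultants, or precisely such rational test points) to be a proof rather than a numerical observation. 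Finally, two slips in your $A_l$ case: the interlacing you need is between the zeros of $f^{+}_{A_l}$ and those of $f^{+}_{A_{l+1}}$ — two consecutive members of the family, not ``$f^{+}_{A_{l+1}}$ with itself shifted'' — and the correct reference is the classical interlacing of zeros of consecutive orthogonal polynomials (\cite{[Sz]} Theorem 3.3.2, quoted as Fact 7.2 in the paper), not Proposition 6.6 of \cite{[I-S]}, which concerns the $B$-series.
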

\begin{proof}
\noindent
{\bf I.  Case for types $A_l$ and $B_l$.}  \\
First, for type $A_l$, we recall the following fact (see \cite{[Sz]} Theorem 3.3.2.).
\begin{fact} 
The system $\{ t^{+}_{A_{l}, \nu} \}_{\nu=2}^{l}$ alternates with the system $\{ t^{+}_{A_{l+1}, \nu} \}_{\nu=2}^{l+1}$, that is,
\[
t^{+}_{A_{l+1}, \nu} > t^{+}_{A_{l}, \nu} > t^{+}_{A_{l+1}, \nu+1}, \,\,\,\,(\nu= 2, \ldots, l).
\]
\end{fact}
Hence, from (2.1), we have the following inequalities
\[
t_{A_{l}, \nu} > t^{+}_{A_{l}, \nu+1} > t_{A_{l}, \nu+1}, \,\,\,\,(\nu= 1, \ldots, l-1).
\]
Next, we recall {\small$f_{B_l}(t) = (-1)^{l}\widetilde{P}_{l}^{(0, 0)}(t)$}. Due to Proposition 6.6 in \cite{[I-S]}, we have the following inequalities
\[
t_{B_{l}, \nu} > t^{+}_{B_{l}, \nu+1} > t_{B_{l}, \nu+1}, \,\,\,\,(\nu= 1, \ldots, l-1).
\]
\noindent
{\bf II.  Exceptional types and non-crystallographic types.}  \\
In section 4, for $P\in${\small $\{ E_l\ (l=6, 7, 8), F_4, G_2, H_3, H_4, I_2(p)\ (p\ge3)\}$}, we constructed a Strum sequence $f_0(t), f_1(t), -f_2(t), \ldots, (-1)^{l-1}f_{l}(t)$ on $[0, 1]$. In \cite{[I-S]}\S4, for the pair of polynomials $f^{+}_0:=f^{+}_{P}(t)/(1-t)$ and $f^{+}_1:=(f_{0})'$, we constructed a Strum sequence\\ $f^{+}_0(t), f^{+}_1(t), -f^{+}_2(t), \ldots, (-1)^{l-2}f^{+}_{l-1}(t)$ on $[0, 1]$. For $t_{0} \in [0, 1]$, \\let $V(f_{P}, t_{0})$ (resp.~$V(f^{+}_{P}, t_{0})$) be the number of sign changes in the sequence\\ $f_0(t), f_1(t), -f_2(t), \ldots, (-1)^{l-1}f_{l}(t)$ (resp.~$f^{+}_0(t), f^{+}_1(t), -f^{+}_2(t), \ldots, (-1)^{l-2}f^{+}_{l-1}(t)$). \\For $t_{0} \in [0, 1]$, we put
\[
\overline{V}(f_{P}, t_{0}):= V(f_{P}, 0) - V(f_{P}, t_{0}),\,\, \overline{V}(f^{+}_{P}, t_{0}):= V(f^{+}_{P}, 0) - V(f^{+}_{P}, t_{0}).
\]
Then, we prove the following fact case by case.

\begin{fact} 
There exist sequences $\{ \alpha_{P, \nu} \}_{\nu=1}^{l-1}$ and $\{ \alpha^{+}_{P, \nu} \}_{\nu=1}^{l-1}$ of real numbers which satisfy inequalities $0 < \alpha_{P, l-1} < \alpha^{+}_{P, l-1}<\cdots <\alpha_{P, 1} < \alpha^{+}_{P, 1} < 1$ such that
\[
\overline{V}(f_{P}, \alpha_{P, i})= l-i , \overline{V}(f_{P}, \alpha^{+}_{P, i})= l-i,\,\,\,\,\,\,\,\,\,\,\,\,\,\,\,\,\,\,\,\,\,\,\,\,\,\,\,\,\,\,\,\,\,\,\,\,\,\,\,\,\,\,\,\,\,\,\,\,\,\,\,\,\,\,\,\,\,
\]
\[
\overline{V}(f^{+}_{P}, \alpha_{P, i})= l-i-1 , \overline{V}(f^{+}_{P}, \alpha^{+}_{P, i})=l-i,(i=1, \ldots, l-1).
\]
\end{fact}
Therefore, due to the Sturm theorem (see for instance \cite{[T]} Theorem 3.1), we have that $t_{P, \nu} \in (\alpha^{+}_{P, \nu}, \alpha_{P, \nu-1})$ and $t^{+}_{P, \nu} \in (\alpha_{P, \nu-1}, \alpha^{+}_{P, \nu-1})$ ($\nu=2, \ldots, l$), where we put $\alpha^{+}_{P, l}:=0$. Hence, we have the following inequalities
\[
t_{P, \nu} > t^{+}_{P, \nu+1} >t_{P, \nu+1} , \,\,\,\,(\nu= 1, \ldots, l-1).
\]
\end{proof}
\smallskip
\noindent
{\bf Example}\,\,For each $P\in${\small $\{ E_l\ (l=6, 7, 8), F_4, G_2, H_3, H_4, I_2(p)\ (p\ge3)\}$}, we give an example of two kinds of sequences $\{ \alpha_{P, \nu} \}_{\nu=1}^{l-1}$ and $\{ \alpha^{+}_{P, \nu} \}_{\nu=1}^{l-1}$ that satisfy the inequalities in Fact 7.3.\\
$E_{6}$\,:\\
$\alpha_{E_{6}, 5}=7/200, \alpha^{+}_{E_{6}, 5}=1/10, \alpha_{E_{6}, 4}=21/100, \alpha^{+}_{E_{6}, 4}=1/4, \alpha_{E_{6}, 3}=2/5,$\\
$ \alpha^{+}_{E_{6}, 3}=3/5, \alpha_{E_{6}, 2}=13/20, \alpha^{+}_{E_{6}, 2}=7/10, \alpha_{E_{6}, 1}=17/20, \alpha^{+}_{E_{6}, 1}=9/10$.\\
$E_{7}$\,:\\
$\alpha_{E_{7}, 6}=1/50, \alpha^{+}_{E_{7}, 6}=1/10, \alpha_{E_{7}, 5}=3/20, \alpha^{+}_{E_{7}, 5}=1/5, \alpha_{E_{7}, 4}=8/25,$\\
$\alpha_{E_{7}, 4}=2/5, \alpha^{+}_{E_{7}, 3}=13/25, \alpha_{E_{7}, 3}=3/5, \alpha^{+}_{E_{7}, 2}=7/10, \alpha_{E_{7}, 2}=4/5,$\\
$\alpha^{+}_{E_{7}, 1}=87/100, \alpha_{E_{7}, 1}=19/20.$\\
$E_{8}$\,:\\$\alpha_{E_{8}, 7}=19/2000, \alpha^{+}_{E_{8}, 7}=1/100, \alpha_{E_{8}, 6}=11/100, \alpha^{+}_{E_{8}, 6}=1/5, \alpha_{E_{8}, 5}=1/4,$\\
$\alpha_{E_{8}, 5}=3/10, \alpha^{+}_{E_{8}, 4}=21/50, \alpha_{E_{8}, 4}=49/100, \alpha^{+}_{E_{8}, 3}=3/5, \alpha_{E_{8}, 3}=7/10,$\\
$\alpha_{E_{8}, 2}=77/100, \alpha^{+}_{E_{8}, 2}=4/5, \alpha_{E_{8}, 1}=9/10, \alpha^{+}_{E_{8}, 1}=19/20.$\\
$F_{4}$\,:\\
$\alpha_{F_{4}, 3}=1/20, \alpha^{+}_{F_{4}, 3}=1/10, \alpha_{F_{4}, 2}=7/20, \alpha^{+}_{F_{4}, 2}=2/5, \alpha_{F_{4}, 1}=7/10, \alpha^{+}_{F_{4}, 1}=4/5.$\\
$G_{2}$\,:\,$\alpha_{G_{2}, 1}=1/6, \alpha^{+}_{G_{2}, 1}=1/2.$\\
$H_{3}$\,:\,$\alpha_{H_{3}, 2}=7/100, \alpha^{+}_{H_{3}, 2}=1/10,\alpha_{H_{3}, 1}=11/20, \alpha^{+}_{H_{3}, 1}=3/5.$\\
$H_{4}$\,:\,$\alpha_{H_{4}, 3}=9/500, \alpha^{+}_{H_{4}, 3}=1/5,\alpha_{H_{4}, 2}=31/100, \alpha^{+}_{H_{4}, 2}=2/5, \alpha_{H_{4}, 1}=7/10,$\\
$ \alpha^{+}_{H_{4}, 1}=4/5.$\\
$I_{2}(p\ge3)$:\,$\alpha_{I_{2}(p), 1}=1/p, \alpha^{+}_{I_{2}(p), 1}=1/2.$\\

\section{Proof of Conjecture 4 for types $D_l \ \ (l\ge4)$}
In the case where $l=4$, we approve Conjecture 4 by hand calculation. Throughout this section, we assume $l\ge5$. 
\begin{theorem} The following inequalities hold for $\nu= 1, \ldots, l-1$
\[
t_{D_{l}, \nu} > t^{+}_{D_{l}, \nu+1} >t_{D_{l}, \nu+1} .
\]
\end{theorem}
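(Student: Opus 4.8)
The plan is to convert the two-sided inequality into a single sign-change count. Since $f^{+}_{D_l}(1)=0$ while $t_{D_l,1}<1$, the largest root $t^{+}_{D_l,1}=1$ automatically lies above every root of $f_{D_l}$; and since $f_{D_l}$ has $l$ simple roots in $(0,1)$ (Theorem 5.1) while $f^{+}_{D_l}$ has $l$ simple roots in $(0,1]$ with $t^{+}_{D_l,1}=1$, the asserted interlacing $t_{D_l,\nu}>t^{+}_{D_l,\nu+1}>t_{D_l,\nu+1}$ is equivalent to the statement that each of the $l-1$ open intervals $(t_{D_l,\nu+1},t_{D_l,\nu})$, $\nu=1,\ldots,l-1$, contains a root of $f^{+}_{D_l}$. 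Indeed, once each interval is shown to contain at least one such root, those $l-1$ interior roots together with the root at $t=1$ already exhaust the $l$ roots of $f^{+}_{D_l}$, so each interval contains exactly one and none lies below $t_{D_l,l}$, which is precisely the interlacing. By the intermediate value theorem it therefore suffices to prove that $f^{+}_{D_l}$ changes sign between consecutive roots of $f_{D_l}$, i.e.\ that $\mathrm{sgn}\,f^{+}_{D_l}(t_{D_l,\nu})$ alternates with $\nu$.

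To compute these signs I would first record, alongside (2.3), the companion expression
\[
f^{+}_{D_l}(t)=A(t)\,f_{B_l}(t)+B(t)\,f_{B_{l-1}}(t)
\]
with coefficients $A(t),B(t)$ that are explicit polynomials in $t$ (rational in $l$), obtained by combining (2.2) with equation (6.4) in \cite{[I-S]} and using the three-term recurrence of Theorem 3.1 to eliminate the resulting $f_{B_{l-2}}$ term. Evaluating at a root $t=t_{D_l,\nu}$ and eliminating $f_{B_l}(t)$ by means of the relation $\tfrac{l-2}{2(l-1)}f_{B_l}(t)=-\tfrac{l}{2(l-1)}(1-2t)f_{B_{l-1}}(t)$ that (2.3) forces at a zero of $f_{D_l}$, I obtain $f^{+}_{D_l}(t_{D_l,\nu})=g(t_{D_l,\nu})\,f_{B_{l-1}}(t_{D_l,\nu})$ for an explicit rational factor $g(t)$. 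The sign of $f_{B_{l-1}}$ at $t_{D_l,\nu}$ is then read off from Corollary 5.4, which places each $t_{D_l,\nu}$ in a definite cell of the grid cut out by the strictly interlacing zeros of $f_{B_l}$ and $f_{B_{l-1}}$ (Fact 5.3); as $\nu$ increases past a zero of $f_{B_{l-1}}$ this sign flips. Provided the auxiliary factor $g$ keeps a constant sign across the relevant cells, $\mathrm{sgn}\,f^{+}_{D_l}(t_{D_l,\nu})$ then alternates, producing a root of $f^{+}_{D_l}$ in each interval. This is Part I, which I would carry out for the $\lfloor l/2\rfloor$ roots of $f_{D_l}$ lying below $1/2$.

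For the roots above $1/2$ (Part II) I would use the symmetry $f_{D_l}(t)=(-1)^{l}f_{D_l}(1-t)$ of Lemma 5.2 to write each large root as $t_{D_l,\nu}=1-t_{D_l,l+1-\nu}$, reducing the geometry to that of Part I; but because $f^{+}_{D_l}$ is \emph{not} symmetric about $1/2$, the value $f^{+}_{D_l}(1-t)$ must be recomputed from the expression above rather than inherited from Part I, and the root of $f^{+}_{D_l}$ at $t=1$ (which has no mirror image at $t=0$) has to be bookkept separately. When $l$ is odd the point $t=1/2$, which is a root of $f_{D_l}$ by Remark 3.3, serves as the hinge between the two halves. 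Assembling Part I and Part II and invoking the root count $l$ for each polynomial then forces exactly one root of $f^{+}_{D_l}$ in every interval $(t_{D_l,\nu+1},t_{D_l,\nu})$ and yields the strict inequalities for all $\nu=1,\ldots,l-1$.

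The main obstacle is precisely this asymmetry. For $A_l$ and $B_l$ the interlacing followed from a single known interlacing of consecutive (Jacobi, Legendre) orthogonal polynomials, but $f_{D_l}$ is not itself orthogonal and, although symmetric about $1/2$, it is paired with the non-symmetric $f^{+}_{D_l}$. Consequently the sign of $f^{+}_{D_l}$ at the zeros of $f_{D_l}$ must be tracked cell-by-cell in the $f_{B_l}/f_{B_{l-1}}$ grid, with a genuinely different computation on the two sides of $1/2$; the delicate points are verifying that the auxiliary factor $g$ never vanishes or changes sign inside the relevant cells and that the two half-counts combine to leave no spurious root of $f^{+}_{D_l}$ below $t_{D_l,l}$, which is where the bulk of the casework lies.
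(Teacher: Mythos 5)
Your proposal is correct in substance, and it takes a genuinely different --- and leaner --- route than the paper. You share one ingredient with the paper: the identity
\[
f^{+}_{D_{l}}(t) = \tfrac{1}{2(l-1)}(lt-2)\,f_{B_{l}}(t) + \tfrac{1}{2(l-1)}\{2(2l-1)t^{2}-5lt+2l\}\,f_{B_{l-1}}(t),
\]
which is exactly the paper's Lemma 8.10, and the paper derives it precisely as you propose (combining (2.2) with equation (6.4) of \cite{[I-S]} and using the three-term recurrence to eliminate $f_{B_{l-2}}$). The divergence is in how the identity is exploited. The paper evaluates it at the zeros of $f_{B_{l}}$, where only the second term survives; since the coefficient $lt-2$ changes sign at $t=2/l$, this positivity argument only controls the upper range (its Part II and Corollary 8.11), and the lower half costs the paper an elaborate detour through the symmetric auxiliary polynomial $\tilde f_{D_{l}}$, the difference $K_{l}=f^{+}_{D_{l}}-\tilde f_{D_{l}}$, the root grids $u_{i},v_{i}$ of $H^{(l-4)}_{l},H^{(l-3)}_{l}$ imported from \cite{[I-S]}, and the midpoint evaluations of Formulae B and C (Propositions 8.3 through 8.9). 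You instead evaluate at the zeros of $f_{D_{l}}$ and eliminate $f_{B_{l}}$ via (2.3); carrying out your substitution explicitly gives
\[
f^{+}_{D_{l}}(t_{D_{l},\nu}) \;=\; \frac{(3l-2)t^{2}-3lt+l}{l-2}\Big|_{t=t_{D_{l},\nu}}\cdot f_{B_{l-1}}(t_{D_{l},\nu}),
\]
and the quadratic in the numerator has discriminant $9l^{2}-4l(3l-2)=l(8-3l)<0$ for $l\ge 3$, so your factor $g$ is strictly positive on all of $\R$: the one conditional step you flagged closes cleanly, with no case split between the two halves. The sign of $f_{B_{l-1}}(t_{D_{l},\nu})$ is then read off from Corollary 5.4 as you say --- by Fact 5.3 each interval there sits inside a single cell of the $f_{B_{l-1}}$ grid, and for odd $l$ the middle root $t_{D_{l},(l+1)/2}=1/2$ is handled by $f_{B_{l-1}}(1/2)\neq 0$ from Formula B --- and a short index check shows the resulting sign is $(-1)^{\nu}$ up to a global constant \emph{uniformly across both halves, including at the junction}. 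Consequently your planned Part II, with its symmetry reduction and separate recomputation of $f^{+}_{D_{l}}(1-t)$, is unnecessary: the same cell-reading works verbatim above $1/2$, which removes the asymmetry obstacle you worried about. Your closing count ($l-1$ sign changes plus the simple root at $t=1$ exhaust the $l$ simple roots of $f^{+}_{D_{l}}$ in $(0,1]$, a fact from \cite{[I-S]} that the paper assumes throughout) is sound. Net comparison: the paper's longer route yields by-products (the interlacing data of Propositions 8.6, 8.7, 8.9 and the $B_{l}$-grid location of Corollary 8.11), while your route proves the theorem with one identity, one substitution, and Corollary 5.4.
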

\begin{proof}
First, we recall an equation from \cite{[I-S]}\S5
{\footnotesize\[
f^{+}_{D_l}(t)=\frac{(-1)^{l-3}}{(l-2)!}\big((1-t)H^{(l-3)}_{l}(t)-(l-3)H^{(l-4)}_{l}(t) \big),
\]}
where {\small$H_{l}(t) :=  (t^2-t)^{l-3} \Bigl\{(l-2) - (3l-4)t + (3l-4)t^2 \Bigr\}$} and its higher order derivatives {\small$H^{(i)}_{l}(t) := \frac{\mathrm{d}^{i}}{\mathrm{d}t^{i}}H_{l}(t)$} for $0\le i\le l-3$. From Lemma 5.2 in \cite{[I-S]}, let  $1=u_1>u_2>\cdots> u_{l-1}> u_l=0$ be all roots of the polynomial $H_l^{(l-4)}(t)=0$. Let $1>v_1>v_2>\cdots>v_{l-1}>0$ be the $l-1$ roots of $H_l^{(l-3)}(t)=0$ so that one has the inequalities:
\[
u_1 > v_1 > u_2 > v_2 > \cdots > u_{l-1} > v_{l-1} > u_l.
\]
Moreover, in the proof of Lemma 5.2 in \cite{[I-S]}, we have shown the following fact.
\begin{fact} 
$t^{+}_{D_{l},l+1-\nu} \in (u_{l+1-\nu},v_{l-\nu}), \,\,\,\,(\nu= 1, \ldots, l-1)$ and $t^{+}_{D_{l}, 1}=1$.
\end{fact} 
Next, the proof for Theorem 8.1 is divided into two parts. \\
\noindent
{\bf Part I}
We discuss location of the following roots
\[
t_{D_{l}, l+1-\nu}\,\, \mathrm{and}\,\, t^{+}_{D_{l}, l+1-\nu}\,(\nu= 1, \ldots, \lfloor l/2 \rfloor).
\]
First, we define two kinds of functions
{\small\[
\tilde{f}_{D_{l}}(t) :=\frac{(-1)^{l-3}}{(l-2)!}\big((1-2t)H^{(l-3)}_{l}(t)-2(l-3)H^{(l-4)}_{l}(t) \big),\,\,\,\,\,\,\,\,\,\,\,\,\,\,\,\,\,\,\,\,\,\,\,\,
\]}
{\small\[
K_{l}(t) := f^{+}_{D_{l}}(t) - \tilde{f}_{D_{l}}(t)=\frac{(-1)^{l-3}}{(l-2)!}\big(tH^{(l-3)}_{l}(t)+(l-3)H^{(l-4)}_{l}(t) \big).
\]}
For the polynomial {\small$\tilde{f}_{D_{l}}(t)$}, we have the following proposition.
\begin{proposition} 
1. The following identity holds for $l = 5, 6, \ldots$:
{\footnotesize\begin{equation}
 \tilde{f}_{D_{l}}(t) = \frac{1}{(l-2)!}\frac{\mathrm{d}^{l-3}}{\mathrm{d}t^{l-3}}\biggl[ t^{l-3}(1-t)^{l-3}(1-2t) \Bigl\{(l-2) - (3l-4)t + (3l-4)t^2 \Bigr\} \biggr].
\end{equation}}
\\
2. The following identity holds for $l = 5, 6, \ldots$:
{\small\begin{equation}
\tilde{f}_{D_{l}}(t) = \frac{l+2}{l}f_{D_{l}}(t) - \frac{2}{l}f_{B_{l}}(t).
\end{equation}}
\\
3. The following identity holds for $l = 5, 6, \ldots$:
{\small\begin{equation}
\tilde{f}_{D_{l}}(t) =(-1)^{l}\tilde{f}_{D_{l}}(1-t).
\end{equation}}
\end{proposition}
\begin{proof}
1. By definition, we have
{\small\[
2f^{+}_{D_{l}}(t) =\tilde{f}_{D_{l}}(t) + \frac{(-1)^{l-3}}{(l-2)!}H^{(l-3)}_{l}(t).
\]}
We recall the Rodrigues type formula for {\small$f^{+}_{D_{l}}(t)$} from Theorem 5.1 in \cite{[I-S]}
{\small\[
f^{+}_{D_{l}}(t) = \frac{1}{(l-2)!}\frac{\mathrm{d}^{l-3}}{\mathrm{d}t^{l-3}}\biggl[ t^{l-3}(1-t)^{l-2} \Bigl\{(l-2) - (3l-4)t + (3l-4)t^2 \Bigr\} \biggr].
\]}
By calculating {\small$2f^{+}_{D_{l}}(t)-\frac{(-1)^{l-3}}{(l-2)!}H^{(l-3)}_{l}(t)$}, we obtain the result.\\
2. From (2.7) and (8.1), we have
{\footnotesize\[
\tilde{f}_{D_{l}}(t) - f_{D_{l}}(t) \,\,\,\,\,\,\,\,\,\,\,\,\,\,\,\,\,\,\,\,\,\,\,\,\,\,\,\,\,\,\,\,\,\,\,\,\,\,\,\,\,
\]}
{\footnotesize\[
= \frac{2}{(l-1)!}\frac{\mathrm{d}^{l-2}}{\mathrm{d}t^{l-2}}\biggl[ t^{l-1}(1-t)^{l-1} \biggr]
\]}
{\footnotesize\[
=\frac{2}{l}f_{D_{l}}(t) - \frac{2}{l}f_{B_{l}}(t).\,\,\,\,\,\,\,\,\,\,\,\,\,\,\,\,\,\,\,\,\,\,\,\,\,\,\,\,
\]}
This completes the proof.\\
3. Since {\small$f_{B_{l}}(t) =(-1)^{l}f_{B_{l}}(1-t)$} and {\small$f_{D_{l}}(t) =(-1)^{l}f_{D_{l}}(1-t)$}, we obtain the result.
\end{proof} 
\begin{remark} As a consequence of (8.2), the polynomial $\tilde{f}_{D_{l}}(t)$  is divisible by \\$2t-1$ if and only if $l$ is odd. 
\end{remark}
We recall a formula from \cite{[I-S]}\S5.\\
\noindent
{\bf Formula B.}\footnote
{ Although, in \cite{[I-S]}\S5, the index $i$ for $h^{(2i-1)}_{k}(t)$ (resp. $h^{(2i)}_{k}(t)$) ranges from $1$ to $\lfloor(k+1)/2\rfloor$\\ (resp. $\lfloor k/2\rfloor$), the range of the index $i$ can be extended to $0\le i \le k$.} Set $h_k^{(i)}(t):=\big(\frac{d}{dt}\big)^i(t(t-1))^k$ for $0\le i \le k$.  Then, we have
\[
\begin{array}{rcl}
h^{(2i-1)}_{k}(\frac{1}{2})&=& 0 \,\,(i = 1, \ldots, k),\\
h^{(2i)}_{k}(\frac{1}{2})& =& \big(\!-\!\frac{1}{4}\big)^{k-i} \frac{k!(2i)!}{(k-i)!i!}  \,(i = 1, \ldots, k).
\end{array}
\]
\noindent
{\it Proof of Formula B.}  By induction on $i$, we obtain the following equations
{\footnotesize\begin{equation}
 h^{(2i-1)}_{k}(t) = \sum_{j=1}^{i}\frac{k!(2i-1)!}{(i-j)!(2j-1)!(k-i-j+1)!}(t^2-t)^{k-i-j+1}(2t-1)^{2j-1}
\end{equation}}
{\footnotesize\begin{equation}
 h^{(2i)}_{k}(t)=\sum_{j=0}^{i}\frac{k!(2i)!}{(i-j)!(2j)!(k-i-j)!}(t^2-t)^{k-i-j}(2t-1)^{2j}.
\end{equation}}
These equations hold for $0\le i \le k$.\\
From (8.4) and (8.5), we obtain the following formula.\\
\smallskip
\noindent
{\bf Formula C.}  We have
\[
h^{(l-3)}_{l-3}(0) = (-1)^{l-1}(l-3)!,\,\,h^{(l-3)}_{l-2}(0) = 0.
\]

\begin{proposition}
The polynomial {\small$\tilde{f}_{D_{l}}(t)$} has $l$ simple roots on the interval $(0, 1)$. 
\end{proposition}
\begin{proof}
For the case where $l=2k$, we consider $2k$ open intervals $(t_{D_{2k}, 2k+1-\nu}, t_{D_{2k}, 2k-\nu})\\ (\nu=1, 2, \ldots, k-1)$, $(t_{D_{2k}, k+1}, 1/2)$ and $(1/2, t_{D_{2k}, k})$, $(t_{D_{2k}, k+1-\nu}, t_{D_{2k}, k-1-\nu}) \\(\nu=1, 2, \ldots, k-1)$. For a non-zero real number $\theta$, we put 
\[
\mathrm{sgn}(\theta):=\theta/|\theta|. 
\]
From the discussion in the proof of Theorem 5.1, we have shown the following properties
{\small\[
\mathrm{sgn}(f_{B_{2k}}(t_{D_{2k}, 2k+1-\nu}))=(-1)^{\nu},\,(\nu=1, 2, \ldots, k).
\]}
Moreover, due to {\bf Formula B}, we have 
{\footnotesize\[
f_{B_{2k}}(1/2)=\big(-\frac{1}{4}\big)^{k} \frac{(2k)!}{k!k!}  \,\, \mathrm{and}\,\, f_{D_{2k}}(1/2)=\big(-\frac{1}{4}\big)^{k} \frac{(2k-2)(2k-2)!}{k!(k-1)!}. 
\]}
Hence, we have 
{\small\[
\mathrm{sgn}(\tilde{f}_{D_{2k}}(1/2))=(-1)^{k}.
\]}
Due to the identity (8.2), we can show {\small$\tilde{f}_{D_{2k}}(t_{D_{2k}, 2k+1-\nu})\tilde{f}_{D_{2k}}(t_{D_{2k}, 2k-\nu}) < 0$}, ($\nu=1, \ldots, k-1$) and {\small$\tilde{f}_{D_{2k}}(t_{D_{2k}, k+1})\tilde{f}_{D_{2k}}(1/2) < 0$}. Thanks to intermediate value theorem, for each interval $(t_{D_{2k}, 2k+1-\nu}, t_{D_{2k}, 2k-\nu}) (\nu=1, 2, \ldots, k-1)$ and $(t_{D_{2k}, 2k+1}, 1/2)$, there exists at least one root of {\small$\tilde{f}_{D_{2k}}(t)$}. From (8.3), the set of roots of {\small$\tilde{f}_{D_{2k}}(t)$} are symmetric with respect to the reflection centered at $t=1/2$. Therefore, we have that for each interval $(t_{D_{2k}, k+1-\nu}, t_{D_{2k}, k-\nu})  \\(\nu=1, 2, \ldots, k-1)$ and $(1/2, t_{D_{2k}, k})$, there exists at least one root of {\small$\tilde{f}_{D_{2k}}(t)$}. Since the polynomial $\tilde{f}_{D_{2k}}(t)$ is of precise degree $2k$, we conclude that, in each intervals\\ $(t_{B_{2k}, 2k+1-\nu}, t_{B_{2k-1}, 2k-\nu})$ ($\nu=1, 2, \ldots, k-1$), $(t_{D_{2k}, 2k+1}, 1/2)$, $(1/2, t_{D_{2k}, k})$ and\\ $(t_{B_{2k-1}, k+1-\nu}, t_{B_{2k}, k+1-\nu}) (\nu=1, 2, \ldots, k-1)$, there is one and only one root of the polynomial $\tilde{f}_{D_{2k}}(t)$.\\
For the case where $l=2k+1$, in a similar manner to case where $l=2k$, we conclude that the polynomial $\tilde{f}_{D_l}(t)$ has $l$ simple roots on the interval $(0, 1)$.
\end{proof}
Let $\tilde{t}_{D_{l}, \nu}, \nu=1, 2, \ldots, l$, be the zeros of $\tilde{f}_{D_l}(t)$ in decreasing order (i.e. $1 > \tilde{t}_{D_{l}, 1} > \tilde{t}_{D_{l}, 2}  > \cdots > \tilde{t}_{D_{l}, l} > 0$).\\
\begin{proposition}
1. For the case where $l = 2k$, we obtain the following properties for $\nu = 1, 2, \ldots, k-1$:
\[
\tilde{t}_{D_{2k}, 2k+1-\nu}\in (t_{D_{2k}, 2k+1-\nu}, t_{D_{2k}, 2k-\nu}),\,\tilde{t}_{D_{2k}, k+1}\in (t_{D_{2k}, k+1}, 1/2),
\]
\[
\tilde{t}_{D_{2k}, k} \in (1/2, t_{D_{2k}, k}),\,\tilde{t}_{D_{2k}, k-\nu}\in (t_{D_{2k}, k+1-\nu}, t_{D_{2k}, k-\nu}).\,\,\,\,\,\,\,\,\,\,\,\,\,\,\,\,\,\,\,\,\,\,\,\,\,
\]
2.  For the case where $l = 2k+1$, we obtain the following properties for $\nu = 1, 2, \ldots, k-1$:
\[
\tilde{t}_{D_{2k+1}, 2k+2-\nu}\in (t_{D_{2k+1}, 2k+2-\nu}, t_{D_{2k+1}, 2k+1-\nu}),\, \tilde{t}_{D_{2k+1}, k+2}\in (t_{D_{2k+1}, k+2}, 1/2),
\]
\[
\tilde{t}_{D_{2k+1}, k+1}=1/2,\,\,\,\,\,\,\,\,\,\,\,\,\,\,\,\,\,\,\,\,\,\,\,\,\,\,\,\,\,\,\,\,\,\,\,\,\,\,\,\,\,\,\,\,\,\,\,\,\,\,\,\,\,\,\,\,\,\,\,\,\,\,\,\,\,\,\,\,\,\,\,\,\,\,\,\,\,\,\,\,\,\,\,\,\,\,\,\,\,\,\,\,\,\,\,\,\,\,\,\,\,\,\,\,\,\,\,\,\,\,\,\,\,\,\,\,\,\,\,\,\,\,\,\,\,\,\,\,\,\,\,\,\,\,\,\,\,\,\,\,\,\,\,\,\,\,\,\,\,\,\,\,\,\,\,\,\,
\]
\[
\tilde{t}_{D_{2k+1}, k}\in (1/2, t_{D_{2k+1}, k}),\,\tilde{t}_{D_{2k+1}, k-\nu}\in (t_{D_{2k+1}, k+1-\nu}, t_{D_{2k+1}, k-\nu}).\,\,\,\,\,\,\,\,\,\,\,\,\,\,\,\,\,\,\,\,\,\,\,\,\,\,\,\,\,\,\,\,
\]

\end{proposition}
\begin{proposition}
1. For the case where $l = 2k$, we obtain the following properties for $\nu = 1, 2, \ldots, k$:
\[
\tilde{t}_{D_{2k}, 2k+1-\nu}\in (u_{2k+1-\nu}, v_{2k-\nu}).
\]

2.  For the case where $l = 2k+1$, we obtain the following properties for $\nu = 1, 2, \ldots, k$:
\[
\tilde{t}_{D_{2k+1}, 2k+2-\nu}\in (u_{2k+2-\nu}, v_{2k+1-\nu}), \tilde{t}_{D_{2k+1}, k+1}=1/2.
\]

\end{proposition}
\begin{proof}
By applying intermediate value theorem to the polynomial 
{\footnotesize\[
\tilde{f}_{D_{l}}(t)=\frac{(-1)^{l-3}}{(l-2)!}\big((1-2t)H^{(l-3)}_{l}(t)-2(l-3)H^{(l-4)}_{l}(t) \big),
\]}
 we obtain the results. We omit details.
\end{proof}
\begin{proposition}
1. The polynomial $K_{l}(t)$ has $l$ simple roots on the interval $[0, 1)$.\\ 
2. Let $t_{K_{l}, \nu}, \nu=1, 2, \ldots, l$, be the zeros of $K_{l}(t)$ in decreasing order (i.e. $1 > t_{K_{l}, 1} > t_{K_{l}, 2}  > \cdots > t_{K_{l}, l} = 0$). For $\nu=1, 2, \ldots, l-1$, we have
\[
t_{K_{l}, l} = 0,\, t_{K_{l}, l-\nu} \in (v_{l-\nu}, u_{l-\nu}).
\]
3. {\small$\left.\frac{\mathrm{d}}{\mathrm{d}t}K_{l}(t)\right|_{t=0}=l-2$}.
\end{proposition}
\begin{proof}
1. - 2. By applying intermediate value theorem to the polynomial $K_{l}(t)$, we obtain the results. We omit details.\\
3. By definition, we have {\small$K'_{l}(0)=\frac{(-1)^{l-3}}{(l-3)!}H^{(l-3)}_{l}(0)$}.\\
Due to {\bf Formula C}, we have 
\[
H^{(l-3)}_{l}(0)=(l-2)h^{(l-3)}_{l-3}(0)+(3l-4)h^{(l-3)}_{l-2}(0)=(-1)^{l+1}(l-2)!.
\]
Hence, we obtain {\small$\left.\frac{\mathrm{d}}{\mathrm{d}t}K_{l}(t)\right|_{t=0}=l-2$}.
\end{proof}
\begin{proposition}
1. For the case where $l = 2k$, we obtain the following properties for $\nu = 1, 2, \ldots, k-1$:
\[
t^{+}_{D_{2k}, 2k+1-\nu}\in (\tilde{t}_{D_{2k}, 2k+1-\nu}, \tilde{t}_{D_{2k}, 2k-\nu}),\,t^{+}_{D_{2k}, k+1}\in(\tilde{t}_{D_{2k}, k+1}, 1/2).
\]

2.  For the case where $l = 2k+1$, we obtain the following properties for $\nu = 1, 2, \ldots, k-1$:
\[
t^{+}_{D_{2k+1}, 2k+2-\nu}\in (\tilde{t}_{D_{2k+1}, 2k+2-\nu}, \tilde{t}_{D_{2k+1}, 2k+1-\nu}),\, t^{+}_{D_{2k+1}, k+2} \in (\tilde{t}_{D_{2k+1}, k+2}, 1/2).
\]
\end{proposition}
\begin{proof}
For the case where $l=2k$, we consider $k$ open intervals $(\tilde{t}_{D_{2k}, 2k+1-\nu}, \tilde{t}_{D_{2k}, 2k-\nu})\\ (\nu=1, 2, \ldots, k-1)$, $(\tilde{t}_{D_{2k}, k+1}, 1/2)$. From Proposition 8.8, we have shown the following properties
{\small\[
\mathrm{sgn}(K_{2k}(\tilde{t}_{D_{2k}, 2k+1-\nu}))=(-1)^{\nu+1},\,(\nu=1, \ldots, k).
\]}
Since {\small$f^{+}_{D_{2k}}(t) = \tilde{f}_{D_{2k}}(t) + K_{2k}(t)$}, we have 
{\small\[
\mathrm{sgn}(f^{+}_{D_{2k}}(\tilde{t}_{D_{2k}, 2k+1-\nu}))=(-1)^{\nu+1},\,(\nu=1, \ldots, k).
\]}
Moreover, due to {\bf Formula B}, we have 
{\footnotesize\[
f^{+}_{D_{2k}}(1/2)= \big(-\frac{1}{4}\big)^{k} \frac{(2k-4)(2k-3)!}{k!(k-2)!}.
\]}
From \cite{[I-S]}\S5, we have 
{\small\[
\mathrm{sgn}(f^{+}_{D_{2k}}(v_{2k-\nu}))=(-1)^{\nu},\,(\nu=1, 2, \ldots, k).
\]}
From Fact 8.2, we obtain the following results for $\nu = 1, 2, \ldots, k-1$:
\[
t^{+}_{D_{2k}, 2k+1-\nu}\in (\tilde{t}_{D_{2k}, 2k+1-\nu}, v_{2k-\nu}),\,t^{+}_{D_{2k}, k+1}\in(\tilde{t}_{D_{2k}, k+1}, 1/2).
\]
From Proposition 8.7, we also have for $\nu = 1, 2, \ldots, k-1$:
\[
t^{+}_{D_{2k}, 2k+1-\nu}\in (\tilde{t}_{D_{2k}, 2k+1-\nu}, \tilde{t}_{D_{2k}, 2k-\nu}),\,t^{+}_{D_{2k}, k+1}\in(\tilde{t}_{D_{2k}, k+1}, 1/2).
\]
For the case where $l=2k+1$, we consider $k$ open intervals \\$(\tilde{t}_{D_{2k+1}, 2k+2-\nu}, \tilde{t}_{D_{2k+1}, 2k+1-\nu}) (\nu=1, 2, \ldots, k-1)$, $(\tilde{t}_{D_{2k+1}, k+2}, 1/2)$. From Proposition 8.8, we have shown the following properties
{\small\[
\mathrm{sgn}(K_{2k+1}(\tilde{t}_{D_{2k+1}, 2k+2-\nu}))=(-1)^{\nu+1},\,(\nu=1, \ldots, k).
\]}
Since {\small $f^{+}_{D_{2k+1}}(t) = \tilde{f}_{D_{2k+1}}(t) + K_{2k+1}(t)$}, we have 
{\small\[
\mathrm{sgn}(f^{+}_{D_{2k+1}}(\tilde{t}_{D_{2k+1}, 2k+2-\nu}))=(-1)^{\nu+1},\,(\nu=1, \ldots, k).
\]}
Moreover, due to {\bf Formula B}, we have 
{\footnotesize\[
f^{+}_{D_{2k+1}}(1/2)= \big(-\frac{1}{4}\big)^{k} \frac{(2k-1)!}{k!(k-1)!}.
\]}
From \cite{[I-S]}\S5, we have 
{\small\[
\mathrm{sgn}(f^{+}_{D_{2k+1}}(v_{2k+1-\nu}))=(-1)^{\nu},\,(\nu=1, 2, \ldots, k).
\]}
From Fact 8.2, we obtain the following results for $\nu = 1, 2, \ldots, k-1$:
\[
t^{+}_{D_{2k+1}, 2k+2-\nu}\in (\tilde{t}_{D_{2k+1}, 2k+2-\nu}, v_{2k+1-\nu}),\,t^{+}_{D_{2k+1}, k+2}\in(\tilde{t}_{D_{2k+1}, k+2}, 1/2).
\]
From Proposition 8.7, we also have for $\nu = 1, 2, \ldots, k-1$:
\[
t^{+}_{D_{2k+1}, 2k+2-\nu}\in (\tilde{t}_{D_{2k+1}, 2k+2-\nu}, \tilde{t}_{D_{2k+1}, 2k+1-\nu}),\,t^{+}_{D_{2k+1}, k+2}\in(\tilde{t}_{D_{2k+1}, k+2}, 1/2).
\]

\end{proof}
\begin{lem}
\begin{equation}
\begin{array}{c}
\label{JacobiD3}
f^{+}_{D_{l}}(t) = \frac{1}{2(l-1)}(lt-2)f_{B_{l}}(t) + \frac{1}{2(l-1)}\{2(2l-1)t^2-5lt+2l\}f_{B_{l-1}}(t).
\end{array}
\end{equation}
\end{lem}
\begin{proof}
First, we recall an equality from (6.4) in \cite{[I-S]}
{\footnotesize\[
f^{+}_{D_{l}}(t) = \frac{l-2}{2l-1}f^{+}_{B_{l}}(t) + \biggl( \frac{l+1}{2l-1}- t \biggr)f^{+}_{B_{l-1}}(t).
\]}
Second, from (1.5) we have
{\footnotesize\[
f^{+}_{D_{l}}(t) = \frac{l-2}{4l-2}\Bigl( f_{B_{l}}(t) + f_{B_{l-1}}(t) \Bigr) + \frac{1}{2}\biggl( \frac{l+1}{2l-1}- t \biggr)\Bigl( f_{B_{l-1}}(t) + f_{B_{l-2}}(t)\Bigr).
\]}
Moreover, {\small$f_{B_{l}}(t)$($= (-1)^{l}\widetilde{P}_{l}^{(0, 0)}(t)$)} satisfies the $3$-term recurrence relation
{\footnotesize\[
l f_{B_{l}}(t)=(2l-1)(1-2t)f_{B_{l-1}}(t)-(l-1)f_{B_{l-2}}(t).
\]}
Hence, we obtain
{\footnotesize\[
\frac{l-2}{4l-2}\Bigl( f_{B_{l}}(t) + f_{B_{l-1}}(t) \Bigr) + \frac{1}{2}\biggl( \frac{l+1}{2l-1}- t \biggr)\Bigl( f_{B_{l-1}}(t) + f_{B_{l-2}}(t)\Bigr)
\]}
{\footnotesize\[
=\frac{1}{2(l-1)}(lt-2)f_{B_{l}}(t) + \frac{1}{2(l-1)}\{2(2l-1)t^2-5lt+2l\}f_{B_{l-1}}(t).
\]}
This coincides with the right hand side of (8.6).
\end{proof}
As a corollary of Lemma 8.10, we have.
\begin{corollary}
$t^{+}_{D_{l}, l+1-\nu} \in(t_{B_{l}, l+1-\nu}, t_{B_{l}, l-\nu})$, ($\nu=1, \ldots, l-1$), $t^{+}_{D_{l}, 1}=1$.
\end{corollary}
\begin{proof}
We note that the coefficient {\small $\frac{1}{2(l-1)}\{2(2l-1)t^2-5lt+2l\}$} in (8.6) takes positive values on the interval $[0, 1]$. Due to Lemma 8.10, the sequence $f^{+}_{D_{l}}(t_{B_{l}, l}), f^{+}_{D_{l}}(t_{B_{l}, l-1}), \ldots, f^{+}_{D_{l}}(t_{B_{l}, 1})$ has alternating sign. By applying intermediate value theorem to the polynomial {\small$f^{+}_{D_{l}}(t)$}, we obtain the results.
\end{proof}
Lastly, we prove the following theorem.
\begin{proposition}
We obtain the following inequalities for $\nu = 1, 2, \ldots, \lfloor l/2 \rfloor -1$:
\[
t^{+}_{D_{l}, l+1-\nu} < t_{D_{l}, l-\nu}.
\]

\end{proposition}
\begin{proof}
By combining Corollary 5.4 and Corollary 8.11, we obtain the results.
\end{proof}
\begin{remark} 
1. For $l=2k$, we have $1/2 < t_{D_{2k}, k}$.\\
2. For $l=2k+1$, we have $1/2 < t^{+}_{D_{2k+1}, k+1}$.
\end{remark}
\begin{proof}
1. This is an immediate consequence of Corollary 5.4.\\
2. Since $t_{B_{2k+1}, k+1}=1/2$, from Corollary 8.11, we obtain the result.
\end{proof}
\noindent
{\bf Part II}\\
We discuss location of the following roots 
\[
t_{D_{l},  \lceil l/2 \rceil+1-\nu}\,\, \mathrm{and}\,\, t^{+}_{D_{l}, \lceil l/2 \rceil+1-\nu}\,(\nu= 1, \ldots, \lceil l/2 \rceil).
\]
We recall the assumption $l \ge5$.
\begin{theorem} The following inequalities hold for $\nu= 1, \ldots, \lceil l/2 \rceil-1$
\[
t_{D_{l}, \lceil l/2 \rceil-\nu} > t^{+}_{D_{l}, \lceil l/2 \rceil+1-\nu} > t_{D_{l}, \lceil l/2 \rceil+1-\nu}.
\]

\end{theorem}
\begin{proof}
First, we prepare a proposition.
\begin{proposition}
We have the following properties:
{\small\[
t^{+}_{D_{l}, \lceil l/2 \rceil+1-\nu}\in (t_{B_{l}, \lceil l/2 \rceil+1-\nu}, t_{B_{l-1}, \lceil l/2 \rceil-\nu} ),\,(\nu=1, \ldots, \lceil l/2 \rceil-1).
\]}
\end{proposition}
\begin{proof}
First, from corollary 8.11, we obtain 
{\small\[
t^{+}_{D_{l}, \lceil l/2 \rceil+1-\nu}\in (t_{B_{l}, \lceil l/2 \rceil+1-\nu}, t_{B_{l}, \lceil l/2 \rceil-\nu}),\,(\nu=1, \ldots, \lceil l/2 \rceil-1).
\] }
Next, we note that the coefficients {\small$lt-2$} and {\small $\frac{1}{2(l-1)}\{2(2l-1)t^2-5lt+2l\}$} in (8.6) take positive values on the interval $(2/l, 1]$. Hence, by applying intermediate value theorem to the polynomial {\small$f^{+}_{D_{l}}(t)$}, we obtain the results.\\
\end{proof}
Next, from Corollary 5.4, we have
{\small\[
t_{D_{l}, \lceil l/2 \rceil+1-\nu}\in (t_{B_{l-1}, \lceil l/2 \rceil+1-\nu}, t_{B_{l}, \lceil l/2 \rceil+1-\nu}),\,(\nu=1, \ldots, \lceil l/2 \rceil).
\]}
Therefore, we have the following inequalities
{\small\[
t_{D_{l}, \lceil l/2 \rceil}<t^{+}_{D_{l}, \lceil l/2 \rceil}<\cdots<t_{D_{l}, 2}<t^{+}_{D_{l}, 2}<t_{D_{l}, 1}<1.
\]}
\end{proof}

\end{proof}
\section{Appendix I. }
From \cite{[F-R1]}\S8, we make a list of $f$-polynomials of types $A_{l}, B_{l}$ and $D_{l}$. Table A contains three infinite series $A_l \ (l\ge1),\ B_l\ (l\ge2)$ and $D_l\ (l\ge4)$. Table B contains the remaining exceptional types $E_6, E_7, E_8, F_4$ and $G_2$ and non-crystallographic types $H_3, H_4$ and $I_2(p)$. We note that in \cite{[F-Z]} Proposition 3.7 the $f$-polynomials of crystallographic types are substantially determined.\\
\bigskip
\centerline{\bf \large  Table A \quad }
\[ 
\begin{array}{l}
f_{A_{l}}(t) \quad =\quad \sum_{k=0}^{l} (-1)^{k}\frac{1}{l+2}\binom {l}{k}\binom {l+k+2}{k+1}t^k,\,\,\,\,\,\,\,\,\,\,\,\,\,\,\,\,\,\,\,\,\,\,\,\,\,\,\,\,\,\,\,\,\,\,\,\,\,\,
\end{array}
\] 
\[ 
\begin{array}{l}
f_{B_{l}}(t) \quad = \quad \sum_{k=0}^{l} (-1)^{k}\binom {l}{k}\binom {l+k}{k}t^k,\,\,\,\,\,\,\,\,\,\,\,\,\,\,\,\,\,\,\,\,\,\,\,\,\,\,\,\,\,\,\,\,\,\,\,\,\,\,\,\,\,\,\,\,\,\,\,\,\,\,\,\,\,
\end{array}
\] 
\[ 
\begin{array}{l}
f_{D_{l}}(t)  \quad =\quad  \sum_{k=0}^{l} (-1)^{k} \big( \binom {l}{k}\binom {l+k-1}{k} + \binom {l-2}{k-2}\binom {l+k-2}{k}\big) t^k. 
\end{array}
\]

\bigskip
\centerline{\bf \large  Table B \quad }
\[ 
\begin{array}{rcl}
f_{E_{6}}(t) & \!\!\!= & 1 - 42t + 399t^2 - 1547t^3 + 2856t^4 - 2499t^5 + 833t^{6},\\
f_{E_{7}}(t) & \!\!\!= & 1 \!-\! 70t \! +\! 945t^{2}\! -\! 5180t^{3}\! +\! 14105t^{4}\! -\! 20202t^5 \! +\! 14560t^6 \! -\! 4160t^{7},\\
f_{E_{8}}(t)  & \!\!\!= & \!\!\!\!\!\!{\scriptsize 1 \!\! - \!\! 128t \!\! + \!\! 2408t^2 \!\!\! - \!\! 17936t^3 \!\!\! + \!\! 67488t^4 \!\!\! - \!\! 140448t^5 \!\!\! + \!\! 163856t^6 \!\!\! - \!\! 100320t^7 \!\!\! + \!\! 25080t^{8}}\\
f_{F_{4}}(t) & \!\!\!= & 1 - 28t + 133t^2 - 210t^3 + 105t^4,\\
f_{G_{2}}(t) & \!\!\!= & 1 - 8t + 8t^2,\\
f_{H_{3}}(t) & \!\!\!= & 1 - 18t + 48t^2 - 32t^3 ,\\
f_{H_{4}}(t) & \!\!\!= & 1 - 64t + 344t^2 - 560t^3 + 280t^4 ,\\
f_{I_2(p)}(t) &\!\!\!= & 1 - (p+2)t + (p+2)t^2 .\\
\end{array}
\]

\section{Appendix II}
 \medskip
 \noindent
 {The zero loci in the complex plane of the $f$-polynomial $f_{P}(t)$ for types $A_{20}, B_{20}, D_{20}$ and $E_8$ are exhibited in the following figures, where the zeros are indicated by $+$.} 

\bigskip
\noindent
\quad\  \includegraphics{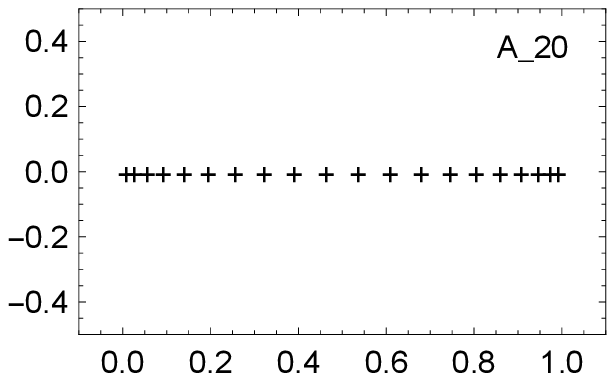}
\includegraphics{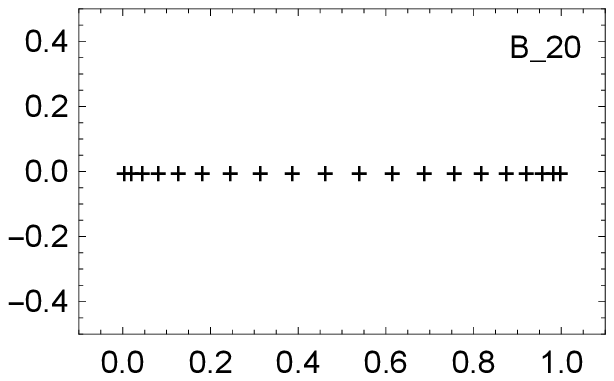}\\

\!\!\! \includegraphics{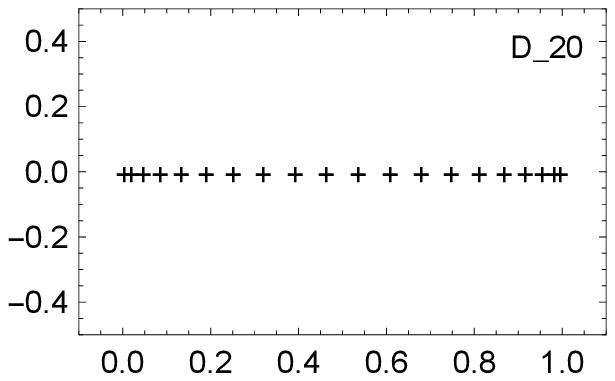}
\includegraphics{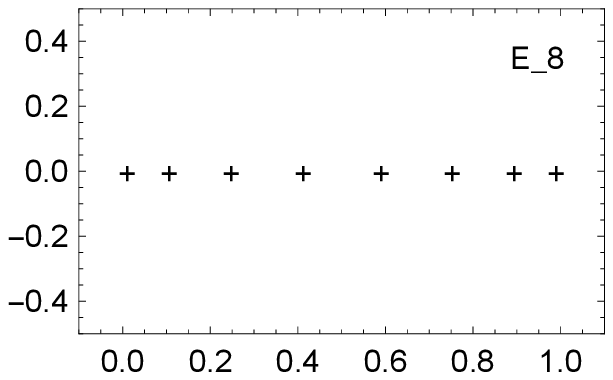}

\bigskip
\section{Addendum}
In Remark 4.4 in \cite{[I-S]}, for an Artin monoid $G^{+}_{P}$ and a dual Artin monoid $G_P^{dual+}$ of type $P$, the authors observed that the derivative at $t = 1$ of the skew-growth function {\small$N_{G_{P}^{+}}(t)$} for the Artin monoid $G^{+}_{P}$ coinsides with that of the dual Artin monoid $G_P^{dual+}$. In addition to this observation, we have found the following equalities
{\footnotesize\[
N'_{G^+_{P}}(1) = N'_{G^{dual +}_{P}}(1) = \frac{(-1)^{l}}{|W|}(lh)\prod_{i=2}^{l}(e_{i}-1),
\]}
where $h$ is the Coxeter number and $e_{1}, e_{2}, \ldots, e_{l}$ are the exponents of the corresponding finite reflection group $W$ of type $P$.\\
\bigskip
\section{Addendum}
In \cite{[I-S]}, we have studied the polynomial {\small$\widehat{N}_{G^{+}_{D_l}}(t)$ ($=f^{+}_{D_l}(t)/(1-t)$)}. We note that the polynomial {\small$\widehat{N}_{G^{+}_{D_l}}(t)$} satisfies the following Fuchsian ordinary differential equation of third-order. The proof is left to the reader.
{\footnotesize\[
 t(t-1)\{2(l-1)t-l\}\frac{ {\mathrm{d}}^{3}y }{ \mathrm{d}t^3 } + \{(l+8)(l-1)t^2-(l^2+6l-2)t+2l \}\frac{ {\mathrm{d}}^{2}y }{ \mathrm{d}t^2 }\,\,\,\,\,
\]}
{\footnotesize\[
\,\,-\{(l-1)(2l^2-5l-2)t-(l^3-2l^2-l-2)\}\frac{ \mathrm{d}y }{ \mathrm{d}t }-(l-1)^3(l+2)y=0.
\]}
\\
\emph{Acknowledgement.}~
The author was very glad to participate in the symposium hThe 3rd Franco - Japanese - Vietnamese Symposium on SingularitieshHanoi, Vietnam, November 30 - December 4, 2015. The author thanks all the organizers. The author is grateful to Kyoji Saito for enlightening discussions and his great encouragement. The author is grateful to Mutsuo Oka for his warm encouragement. This researsh was supported by World Premier International Research Center Initiative (WPI Initiative), MEXT, Japan. 

\begin{flushright}
\begin{small}
Graduate School of Mathematical Sciences, \\
University of Tokyo, Komaba, Meguro,\\
Tokyo, 153-8914, Japan \\

e-mail address :  tishibe@ms.u-tokyo.ac.jp
\end{small}
\end{flushright}

\begin{thebibliography}{[Sa:20]}



\bibitem[At]{[At]}
C. A. Athanasiadis: 
On some enumerative aspects of generalized associahedra, {\it European J. Combin.} {\bf 28} (2007), 1208-1215.


\bibitem[Be]{[Be]}
D. Bessis: The dual braid monoid, {\it Ann. Sci. \'{E}cole. Norm. Sup.} (4) {\bf 36} (2003), no. 5, 647-683.





\bibitem[B-W]{[B-W]}
T. Brady and C. Watt:
Non-crossing partition lattices in finite real reflection groups, {\it Trans. Amer. Math. Soc.} \textbf{360} (2008), no. 4, 1983-2005.

\bibitem[Br]{[Br]}
P. Br\"and\'en: On linear transformations preserving the P\'olya frequency property, {\it Trans. Amer. Math. Soc.} {\bf 358} (2006), no. 8, 3697-3716.



\bibitem[C]{[C]} F. Chapoton:
Enumerative properties of generalized associahedra, {\it S\'{e}min. Loth. de Combinatoire} \textbf{51} (2004), Article B51b, 16pp (electronic).  





\bibitem[F-R1]{[F-R1]}
S. Fomin and N. Reading:
Generalized Cluster Complexes and Coxeter Combinatorics. {\it Int. Math. Res. Not.} {\bf 44} (2005), 2709-2757.

\bibitem[F-R2]{[F-R2]}
S. Fomin and N. Reading:
Root systems and generalized associahedra, lecture notes for the IAS/Park City Graduate Summer School in Combinatorics, arXiv:math.CO/0505518.

\bibitem[F-Z]{[F-Z]}
S. Fomin and A. Zelevinsky:
Y-systems and generalized associahedra. {\it Ann. of Math. (2)} {\bf 158} (2003), no. 3, 977-1018. 



\bibitem[I-S]{[I-S]}
T. Ishibe and K. Saito:
Zero loci of skew-growth functions for dual Artin monoids (submitted), arXiv:math.CO/1603.04532.

\bibitem[K]{[K]}
G. Kreweras:
Sur les partitions non crois\'{e}es d'un cycle, {\it Discrete Math.} {\bf 1} (1972), no. 4, 333-350.


\bibitem[Sa1]{[Sa1]} 
 K. Saito:  
Growth functions associated with Artin monoids of finite type, 
{\it Proc. Japan Acad. Ser. A Math. Sci. } \textbf{84} (2008), no. 10, 179-183.

\bibitem[Sa2]{[Sa2]}
 K. Saito:
 Inversion formula for the growth function of a cancellative monoid, {\it J. Algebra} \textbf{385} (2013), 314-332.

\bibitem[Sa3]{[Sa3]} 
 K. Saito:  
Limit elements in the configuration algebra for a cancellative monoid, 
{\it Publ. Res. Inst. Math. Sci.} \textbf{46} (2010), no. 1, 37-113.

\bibitem[Sa4]{[Sa4]} 
 K. Saito: The Zero Locus of the F-triangle, (submitted), arXiv:math.CO/1608.02357.
\bibitem[Sz]{[Sz]}
G. Szeg\"o:
Orthogonal Polynomials, 4th edn. American Mathmatical Society, Providence (1975). 
\bibitem[T]{[T]} 
T. Takagi: Lectures on Algebra, Kyoritsu, Tokyo,
	     1983. Published in Japanese.

\end{thebibliography}
\end{document}